\documentclass[a4paper,11pt]{article}

\usepackage[english]{babel}
\usepackage{amsfonts}
\usepackage{latexsym}
\usepackage{amsthm}
\usepackage{amsopn}
\usepackage{amsmath}
\usepackage[all]{xy}
\usepackage{verbatim}
\usepackage{amssymb}
\usepackage{mathrsfs}
\usepackage{float}
\usepackage{eucal}
\usepackage[active]{srcltx}
\usepackage{fullpage}
\usepackage{hyperref}
\usepackage{manyfoot}
\usepackage{amscd}
\usepackage[dvips]{graphics}
\usepackage[dvips]{graphicx}
\usepackage{color}
\usepackage{cite}
\usepackage{pgf,tikz}
\usetikzlibrary{arrows}

\usepackage[a4paper]{anysize}\marginsize{3.5cm}{3.5cm}{1.3cm}{2cm}
\pdfpagewidth=\paperwidth \pdfpageheight=\paperheight

\usepackage[latin1]{inputenc}
\usepackage[all]{xy}
\usepackage{indentfirst}
\usepackage{fancyhdr}
\usepackage{textcomp}
\usepackage{graphicx}
\usepackage{enumerate}

\usepackage{tikz}

\usepackage[bitstream-charter]{mathdesign}

\newtheorem*{theoremA}{Theorem}

\newtheorem{inizio}{Lemma}[section]

\newtheorem{corollary}[inizio]{Corollary}
\newtheorem{proposition}[inizio]{Proposition}
\newtheorem{lemma}[inizio]{Lemma}

\theoremstyle{definition}

\newtheorem{example}[inizio]{Example}
\newtheorem{remark}[inizio]{Remark}





\newcommand{\mQ}{\mathscr{Q}}



\newcommand{\mO}{\mathscr{O}}
\newcommand{\mF}{\mathscr{F}}

\newcommand{\mL}{\mathscr{L}}
\newcommand{\mE}{\mathsf{E}}












\newcommand{\wA}{\widehat{A}}

\newcommand{\fo}{f_* \omega _{S/E}}

\newcommand{\lr}{\longrightarrow}

\renewcommand{\to}{\longrightarrow}

\title{A note on surfaces with $p_g=q=2$ and an irrational fibration}

\author{Matteo Penegini and Francesco Polizzi}

\setcounter{section}{-1}

\date{}

\begin{document}

\maketitle

\begin{abstract}
We study several examples of surfaces with $p_g=q=2$ and maximal Albanese dimension that are endowed with an irrational fibration. 
\end{abstract}

\noindent








\Footnotetext{{}}{\textit{2010 Mathematics Subject
Classification}: 14J29}

\Footnotetext{{}} {\textit{Keywords}: {Surfaces of general type,
fibrations, Albanese map}}



\section{Introduction}\label{sec.intro}

The classification of surfaces $S$ of general type  with
$\chi(\mathscr{O}_S)=1$, i.e. $p_g(S)=q(S)$, is currently an active area of research, see for instance \cite{BCP06}. In this case some well-known results imply $p_g \leq 4$ . While
surfaces with $p_g=q=4$ and $p_g=q=3$ have been completely
described in \cite{D82}, \cite{CCML98}, \cite{HP02}, \cite{Pi02},
the classification of those with $p_g=q=2$ is still missing, albeit some new interesting examples were recently discovered (\cite{CH06,Pe11,PP14,PP13a,PP13b}). For a recent survey on this topic we refer the reader to  \cite{Pe12}.

One of the most useful techniques used to understand the geometry of an algebraic surface $S$ is the study of its fibrations $f\colon S \longrightarrow C$, where $C$ is a smooth curve. If $g(C)\geq 1$ we say that the $f$ is \emph{irrational}, and if all its smooth fibres are isomorphic we say that $f$ is \emph{isotrivial}. The study of irrational fibrations on surfaces with $p_g=q=2$ was started by Zucconi in \cite{Z03}. Later on, however, it was found that Zucconi's results were incomplete: see \cite{Pe11}, where the first author deals with the isotrivial case.

If the image of the Albanese map of $S$ is a curve then everything is known: $S$ is a so-called generalized hyperelliptic surface, i.e. a quotient $S= (C_1 \times C_2)/G$ by the diagonal action of a finite group  $G$ on $C_1 \times C_2$ such that the Galois morphism $C_1 \to C_1/G$ is unramified and $C_2 /G \cong \mathbb{P}^1$, see \cite{cat00, Z03b, Pe11}. Therefore it only remains to investigate the case where $S$ has \emph{maximal Albanese dimension}, i.e. its Albanese map $\alpha \colon S \to \textrm{Alb}(S)$ is generically finite; in this situation the base of any irrational fibration on $S$ is an elliptic curve $E$ (see Proposition \ref{prop:irr-penc}).  An important invariant of the fibration $f$ is  the push-forward of the relative canonical bundle $f_* \omega_{S/E}$  (see for instance \cite[Chapter III]{BHPV03}), and the aim of this paper is to explicitly calculate this invariant in many different examples related to our previous work on the subject, see \cite{PP13a, PP13b, PP14}.

This article is organized as follows. In Section \ref{sec.prel} we fix the notation and  the terminology and we state some technical results needed in the sequel of the paper. Moreover, by using methods borrowed from \cite{Fu78,Fu78b,Ba00,CD13}, we deduce the following structure result, see Propositions \ref{thm:main} and \ref{prop:car r=1}.
\begin{theoremA} \label{thm:main0}
Let $S$ be a minimal surface of general type with $p_g=q=2$ and maximal Albanese dimension. Let $f \colon S
\to E$ be an irrational fibration whose general fibre $F$ has genus $g$. Then $g(E)=1$ and there exist $p \in E$ and $r \geq 1$ such that
\begin{equation*}
\fo = \mO_E \oplus \mE_p(r, \, 1) \oplus \bigoplus_{i=2}
^{g-r} \mathscr{Q}_i.
\end{equation*}
Here $\mE_p(r, \, 1)$ denotes the unique indecomposable vector bundle on $E$ of rank $r$ and determinant $\mO_E(p)$, whereas the $\mathscr{Q}_i$ are pairwise non-isomorphic, non-trivial line bundles in $\mathrm{Tors}(\mathrm{Pic}^0 E)$.

Finally, $r=1$ if and only if the divisor $K_S - F_p$ is effective.
\end{theoremA}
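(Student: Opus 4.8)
The plan is to read off the shape of $\mathscr{V}:=\fo$ from its numerical invariants together with Atiyah's classification of bundles on an elliptic curve, and then to pin down the degree-zero part by Hodge-theoretic positivity and generic vanishing. First I would record that $g(E)=1$, which is exactly Proposition~\ref{prop:irr-penc}, so that $\omega_E\cong\mO_E$ and $\omega_{S/E}\cong\omega_S$. Then $\mathscr{V}$ has rank $g$ and, since $\chi(\mO_E)=0$, degree $\deg\mathscr{V}=\chi(\mO_S)=p_g-q+1=1$; by Fujita's semipositivity theorem \cite{Fu78} it is nef. Finally $h^0(E,\mathscr{V})=h^0(S,\omega_S)=p_g=2$, and the Leray sequence for $f$ together with relative duality $R^1f_*\mO_S\cong\mathscr{V}^\vee$ yields $h^0(E,\mathscr{V}^\vee)=q-g(E)=1$.

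Next I would decompose $\mathscr{V}$ into Atiyah indecomposables. Each summand is nef, hence of non-negative degree; as the degrees sum to $1$, exactly one summand $\mathscr{A}$ has degree $1$ and all others degree $0$. Writing $r:=\rk\mathscr{A}$, indecomposability and $\gcd(r,1)=1$ force $\mathscr{A}$ to be the unique stable bundle of rank $r$ with $\det\mathscr{A}=\mO_E(p)$ for a well-defined $p\in E$, i.e. $\mathscr{A}=\mE_p(r,1)$. For the degree-zero part the point is that, being the flat part of the Hodge bundle of the family, it is unitary and semisimple (Deligne semisimplicity, in the form used in \cite{CD13}), hence polystable of slope $0$; on an elliptic curve such a bundle is a direct sum of line bundles in $\mathrm{Pic}^0(E)$, there being no stable bundle of rank $\ge 2$ and degree $0$. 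Thus $\mathscr{V}=\mE_p(r,1)\oplus\bigoplus_j L_j$ with $L_j\in\mathrm{Pic}^0(E)$. Since $\mathrm{Hom}(\mE_p(r,1),\mO_E)=H^0(\mE_p(r,1)^\vee)=0$ while $\mathrm{Hom}(L_j,\mO_E)\ne 0$ precisely when $L_j=\mO_E$, the equality $h^0(\mathscr{V}^\vee)=1$ forces exactly one $L_j$ to be trivial; relabelling gives $\mathscr{V}=\mO_E\oplus\mE_p(r,1)\oplus\bigoplus_{i=2}^{g-r}\mathscr{Q}_i$ with each $\mathscr{Q}_i$ non-trivial in $\mathrm{Pic}^0(E)$.

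It remains to prove that the $\mathscr{Q}_i$ are torsion and pairwise distinct, and here maximal Albanese dimension enters. By the projection formula a summand $\mathscr{Q}_i$ gives $h^0(S,\omega_S\otimes f^*\mathscr{Q}_i^{-1})\ge 2$, so $f^*\mathscr{Q}_i^{-1}$ lies in $V:=\{L\in\mathrm{Pic}^0(S):h^0(\omega_S\otimes L)\ge 2\}$, whose components are torsion translates of abelian subvarieties of $\mathrm{Pic}^0(S)=\widehat{\AS}$ by generic vanishing (Green--Lazarsfeld) and Simpson's structure theorem. Along the subtorus $\widehat{E}:=f^*\mathrm{Pic}^0(E)$ the twisted summand $\mE_p(r,1)\otimes\mathscr{Q}^{-1}$ always contributes exactly one section, so $h^0(\omega_S\otimes f^*\mathscr{Q}^{-1})$ equals $1$ off the finite set $\{\mO_E\}\cup\{\mathscr{Q}_i\}$; in particular $\widehat{E}\not\subseteq V$, which rules out any component $\tau+T$ with $T\supseteq\widehat{E}$. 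Hence the component of $V$ through $f^*\mathscr{Q}_i^{-1}$ is either a torsion point or a torsion translate $\tau+T$ with $T\ne\widehat{E}$ of positive dimension; in the latter case $\widehat{E}\cap T$ is finite, so $f^*\mathscr{Q}_i^{-1}\in\widehat{E}\cap(\tau+T)$ is torsion by the standard multiplication-by-$\mathrm{ord}(\tau)$ argument, giving torsion of $\mathscr{Q}_i$. For distinctness, a repetition $\mathscr{Q}_i\cong\mathscr{Q}_j$ would force $h^0(S,\omega_S\otimes f^*\mathscr{Q}_i^{-1})\ge 3$, and I would try to bound this multiplicity by passing to the étale cover $S'=S\times_E E'$, where $\nu\colon E'\to E$ trivializes $\mathscr{Q}_i$, so that $g_*\omega_{S'/E'}=\nu^*\mathscr{V}$ acquires extra trivial summands whose number equals the relative irregularity $q(S')-g(E')$ (Fujita). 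I expect distinctness to be the main obstacle: this comparison only yields multiplicity $\le g-1$, and pushing it down to $1$ requires a finer analysis of the isotypical pieces of the underlying unitary local system, equivalently of the higher jumping loci $V^0_{\ge m}(\omega_S)$, which is the delicate heart of the argument.

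Finally, for the last assertion I would use $f^*\mO_E(p)=\mO_S(F_p)$, so by the projection formula $h^0(S,\mO_S(K_S-F_p))=h^0(E,\mathscr{V}\otimes\mO_E(-p))$. Each degree-zero summand $L$ contributes $h^0(L\otimes\mO_E(-p))=0$, while $h^0(\mE_p(r,1)\otimes\mO_E(-p))$ vanishes for $r\ge 2$ (a stable bundle of negative degree $1-r$) and equals $h^0(\mO_E)=1$ for $r=1$, since $\mE_p(1,1)=\mO_E(p)$. Hence $K_S-F_p$ is effective if and only if $r=1$, which completes the proof.
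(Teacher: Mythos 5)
Most of your argument is sound, and in two places it takes a genuinely different route from the paper's. For the shape of the decomposition, the paper uses Fujita's first decomposition theorem together with Barja's formula to get exactly $h=q(S)-g(E)=1$ trivial summands (\cite{Fu78}, \cite{Ba00}); you instead extract the single trivial summand from $h^0(E,\mathscr{V}^\vee)=q-g(E)=1$, obtained via relative duality $R^1f_*\mO_S\cong\mathscr{V}^\vee$ and Leray. That is correct and equivalent. Note that your appeal to the unitary flatness of the degree-zero part (Fujita's second decomposition, \cite{Fu78b}, as proved in \cite{CD13}) is not a luxury but is genuinely needed: section counts alone cannot exclude a degree-zero Atiyah summand $F_\rho$ (the indecomposable iterated extension of $\mO_E$ by itself), since $h^0(F_\rho)=h^0(F_\rho^\vee)=1$, exactly as for $\mO_E\oplus(\text{nontrivial degree-zero line bundles})$; the paper uses the same input, phrased as ampleness of $\mathscr{A}$ plus \cite[Corollary 21]{CD13}. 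For torsion of the $\mathscr{Q}_i$, where the paper simply cites \cite[Corollary 21]{CD13}, your argument via the jump loci is independent and correct: Simpson's theorem \cite{Sim93} does cover the multiplicity loci $\{L: h^0(\omega_S\otimes L)\geq 2\}$, and your key observation that $h^0(S,\omega_S\otimes f^*\mQ^{-1})=1$ off a finite subset of $\widehat{E}=f^*\mathrm{Pic}^0E$ is right because $h^0(E,\mE_p(r,1)\otimes\mQ')=1$ for every $\mQ'\in\mathrm{Pic}^0E$. The final equivalence ($r=1$ iff $K_S-F_p$ effective) is proved exactly as in the paper's Proposition~\ref{prop:car r=1}, via the projection formula and the vanishing of sections of the indecomposable bundle $\mE_p(r,1)\otimes\mO_E(-p)$ of degree $1-r<0$.

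The genuine gap is the one you flag yourself: pairwise distinctness of the $\mathscr{Q}_i$, and your proposed repair does not work as stated. If $\mQ_1=\mQ_2$ and $\nu\colon E'\to E$ trivializes $\mQ_1$, then $\nu^*\fo$ indeed acquires at least three trivial summands, but by the same Fujita--Barja count this only says $q(S')\geq g(E')+3$ for the connected \'etale cover $S'=S\times_E E'$, and nothing bounds $q(S')$ a priori ($S'$ has $\chi(\mO_{S'})=\deg\nu$, so its irregularity is unconstrained by the hypotheses on $S$); no contradiction results, which is why your bound stalls. The paper closes this point with a short, purely cohomological argument (proof of Proposition~\ref{thm:main}): a repetition $\mQ_1=\mQ_2$ gives $h^0(S,\,\omega_S\otimes f^*\mQ_1^{-1})\geq 3$ by the projection formula, while Serre duality gives $h^2(S,\,\omega_S\otimes f^*\mQ_1^{-1})=h^0(E,\,\mQ_1)=0$; since $\chi(\omega_S\otimes f^*\mQ_1^{-1})=\chi(\mO_S)=1$, this forces $h^1(S,\,\omega_S\otimes f^*\mQ_1^{-1})\geq 2$, contradicting the Leray spectral sequence computation \eqref{eq:jump}, which yields $h^1=2$ only at $\mQ=\mO_E$ and $h^1=1$ at each $\mQ_i^{-1}\neq\mO_E$. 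In other words, the missing step is exactly a jump-value computation of the kind you already set up for torsion: one must show that along $\widehat{E}$ the generic value of $h^1(S,\,\omega_S\otimes f^*\mQ)$ jumps by precisely one, not more, at each nontrivial $\mQ_i^{-1}$; your "finer analysis of the isotypical pieces" is not needed once this is in place.
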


This theorem corrects and extends Zucconi's results quoted above. For instance, in \cite{Z03} only the case where $r=1$ is considered, and the existence of non-isotrivial irrational  fibrations is overlooked. See Remark \ref{rem_Zucconi} for more details. 

In Section \ref{sec:examples} we provide several examples with $r =1$ and $r \geq 2$, both in the isotrivial case (Examples \ref{ex:pen-1}, \ref{ex:pen-4}, \ref{ex:pen-5}, \ref{ex:pen-6}) and in the non-isotrivial one (Examples \ref{ex:K^2=6-d=2}, \ref{ex:K^2=5,6}, \ref{ex:diagonal}, \ref{ex:r non limited}). In particular, we show the existence of surfaces $S$ with $p_g=q=2$ and $(K_S^2, \, \deg \alpha) \in \{(4, \, 2)\}, \{(5,\, 3), \, (6, \, 2), \, (6, \, 4) \}$, such that $S$ contains an \emph{infinite} family $f_n \colon S \to E$ of non-isotrivial,  irrational fibrations whose fibre genera form an unbounded sequence. In the case $(K_S^2, \, \deg \alpha)= (4, \, 2)$ the integer $r(f_n)$ can be arbitrarily large, too: in fact, $r(f_n)=n^2+1$. 

In the discussion of the non-isotrivial case we need some explicit computations on $(1, \, 2)$-polarized abelian surfaces. For the reader's convenience, we put them in the Appendix.

\bigskip
\textbf{Acknowledgments} It is a pleasure to thank I. Bauer, S. Rollenske and F. Zucconi for stimulating discussions and useful suggestions, and the anonymous referee whose comments helped to improve the presentation of these results.

Both authors were partially supported by
Progetto MIUR di Rilevante Interesse Nazionale \emph{Geometria
delle Variet$\grave{a}$ Algebriche e loro Spazi di Moduli} and by
GNSAGA - INdAM. They are also grateful to the Hausdorff Research Institute for Mathematics (Bonn)
for the invitation and hospitality during the Junior Trimester Program \emph{Algebraic Geometry}
(February-April, 2014).

\bigskip
\textbf{Notation and conventions.} We work over the field
$\mathbb{C}$ of complex numbers. Throughout the paper we use
italic letters for line bundles  and
 capital letters for the corresponding Cartier divisors, so we write for
instance $\mathcal{L}=\mathcal{O}_S(L)$.

By \emph{surface} we mean a projective, non-singular surface
$S$, and for such a surface $\omega_S=\mO_S(K_S)$ denotes the
canonical class, $p_g(S)=h^0(S, \, \omega_S)$ is the
\emph{geometric genus}, $q(S)=h^1(S, \, \omega_S)$ is the
\emph{irregularity} and $\chi(\mathscr{O}_S)=1-q(S)+p_g(S)$ is the
\emph{Euler-Poincar\'e characteristic}.

If $A$ is an abelian variety and $\wA:= \textrm{Pic}^0(A)$ is its
dual variety,  $A[2]$ and $\widehat{A}[2]$ stand for
the corresponding subgroups of $2$-division points. If $x \in A$
, we write $t_x \colon A \to A$ for the translation by $x$. Given any line bundle $\mathcal{L}$
on $A$
 we denote by $\phi_{\mathcal{L}}$ the morphism $\phi_{\mL} \colon A \to
\wA$ given by $x \mapsto t^*_x \mL \otimes \mL^{-1}$. If
$c_1(\mL)$ is non-degenerate then $\phi_{\mL}$ is an isogeny, and
we write $K(\mL)$ for its kernel.

\section{Preliminaries} \label{sec.prel}
Given any fibration $f \colon S \to C$ with general fibre $F$, we define the \emph{slope} of $f$ as the ratio
\begin{equation*}
\lambda(f)= K^2_{S/C} / \Delta(f),
\end{equation*}
where
\begin{equation*}
\begin{split}
K^2_{S/C} & = K_S^2 - 8(g(C)-1)(g(F)-1), \\ \Delta(f)&= \chi(\mO_S)- (g(C)-1)(g(F)-1)
\end{split}
\end{equation*}
and $g(C)$ and $g(F)$ are the genera of $C$ and $F$, respectively.

\begin{proposition} \label{prop:xiao}
Let $f \colon S \to C$ be a relatively minimal fibration with
$\lambda(f)=4$ and $q(S) > g(C)$. Then necessarily $q(S)=g(C)+1$
and moreover  $f_* \omega_{S/C}$ is the direct sum of $\mO_C$ and
a semistable sheaf of rank $g(F)-1$.
\end{proposition}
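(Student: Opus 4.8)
The plan is to recover the shape of $f_*\omega_{S/C}$ from the slope inequality applied to its Harder--Narasimhan filtration, after translating the hypothesis $q(S)>g(C)$ into the presence of trivial summands. First I would record that $\Delta(f)=\deg f_*\omega_{S/C}$: relative duality gives $R^1f_*\mO_S\cong (f_*\omega_{S/C})^\vee$, and the Leray filtration for $\mO_S$ (using $f_*\mO_S=\mO_C$) yields $\chi(\mO_S)=\deg f_*\omega_{S/C}+(g(C)-1)(g(F)-1)$; hence $\lambda(f)=K^2_{S/C}/\deg f_*\omega_{S/C}$ is the classical slope of $f$. By Fujita's theorems \cite{Fu78,Fu78b} the sheaf $\EE:=f_*\omega_{S/C}$ is nef and decomposes as $\EE=\mathcal U\oplus \mathcal A$, where $\mathcal U$ is unitary flat (a direct sum of stable sheaves of degree $0$) and $\mathcal A$ satisfies $\mu_{\min}(\mathcal A)>0$; see also \cite{CD13}.

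Next I would pin down the trivial summands. The same Leray argument gives $q(S)=g(C)+h^0\!\big(C,(f_*\omega_{S/C})^\vee\big)$. Since $\EE$ is nef, a nonzero section of $\EE^\vee$ dualises to a surjection $\EE\twoheadrightarrow\mO_C$ (a line-bundle quotient of a nef bundle has non-negative degree, while it is a subsheaf of $\mO_C$, so it is $\mO_C$ itself), and $h^0(\mathcal A^\vee)=0$ because $\mu_{\min}(\mathcal A)>0$. Therefore $h^0(\EE^\vee)$ equals the number $t$ of $\mO_C$-summands of $\mathcal U$, and the hypothesis $q(S)>g(C)$ reads $t=q(S)-g(C)\ge 1$. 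Writing $\mathcal U=\mO_C^{\oplus t}\oplus\mathcal U'$ with $\mathcal U'$ unitary flat and without trivial summands, we see that $\EE$ contains at least one copy of $\mO_C$, so the bottom graded piece of its Harder--Narasimhan filtration has slope $0$.

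The heart of the matter is Xiao's slope inequality. Writing the Harder--Narasimhan graded slopes $\mu_1>\cdots>\mu_n\ge 0$ with cumulative ranks $r_1<\cdots<r_n=g(F)$, Xiao's method bounds $K^2_{S/C}$ from below by a weighted sum $\sum_j(\mu_j-\mu_{j+1})\,c(r_j)$, where each $c(r_j)$ is a Clifford--Castelnuovo--type lower bound for the degree cut out on a general fibre by the corresponding fibrewise subcanonical system, the top term being $2g(F)-2$. Feeding in the slope-$0$ bottom piece forced above, this estimate sharpens to the relative Severi-type bound $K^2_{S/C}\ge 4\deg f_*\omega_{S/C}$, that is $\lambda(f)\ge 4$, as soon as $t\ge 1$. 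Since we assume $\lambda(f)=4$, we are in the equality case, which I expect to force three degeneracies: (i) the filtration has exactly two steps; (ii) the positive step $\mathcal A$ is semistable of rank $g(F)-1$ and slope $\deg\EE/(g(F)-1)>0$; and (iii) the slope-$0$ step has rank $1$, hence equals a single $\mO_C$ with $\mathcal U'=0$. Points (ii)--(iii) give $q(S)=g(C)+1$ together with $f_*\omega_{S/C}=\mO_C\oplus\mathcal A$, with $\mathcal A$ semistable of rank $g(F)-1$, as claimed.

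The main obstacle is the equality analysis: I must show that $\lambda(f)=4$ is \emph{rigid}, i.e. that every inequality in Xiao's chain --- the Clifford bounds on the general fibre and the Hodge-index estimates for the moving parts on $S$ --- is forced to be an equality. Two points deserve particular care. First, I must rule out $t\ge 2$ and, more delicately, the nontrivial unitary summand $\mathcal U'$: being of slope $0$, such a summand does not contribute to the count $t=h^0(\EE^\vee)$, yet it would enlarge the slope-$0$ graded piece beyond $\mO_C$, so the equality case has to eliminate it as well. Second, I must verify that at the extremal value the positive part cannot split off an intermediate Harder--Narasimhan step and is genuinely semistable. Once these degeneracies are established the asserted decomposition, and with it the equality $q(S)=g(C)+1$, follow at once.
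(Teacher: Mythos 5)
The paper's own ``proof'' of this proposition is a one-line citation: the statement is exactly Xiao's Theorem~3 on p.~462 of \cite{Xi87}, and nothing is reproved. Your proposal instead tries to reconstruct Xiao's argument. Its preparatory steps are correct and cleanly done: the identification $\Delta(f)=\deg f_*\omega_{S/C}$ via relative duality and Leray, the count $q(S)-g(C)=h^0\bigl(C,(f_*\omega_{S/C})^\vee\bigr)$, and the fact that this number equals the number of trivial summands of the unitary flat part $\mathcal{U}$ in the Fujita decomposition are all sound.

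The genuine gap is that everything constituting the actual theorem is asserted rather than proved. The two essential claims --- (a) that $t\geq 1$ upgrades the slope inequality to $\lambda(f)\geq 4$, and (b) that equality $\lambda(f)=4$ forces a two-step Harder--Narasimhan filtration whose slope-$0$ piece is a single $\mO_C$ (hence $t=1$ and $\mathcal{U}'=0$) and whose positive piece is semistable of rank $g(F)-1$ --- enter your text only through the phrases ``this estimate sharpens to'' and ``which I expect to force'', and your closing paragraph explicitly lists them as unresolved obstacles. These are not routine verifications: they are precisely Theorems~2 and~3 of \cite{Xi87}, and proving them means running Xiao's machinery in full, i.e.\ the bound $K^2_{S/C}\geq\sum_j(\mu_j-\mu_{j+1})(d_j+d_{j+1})$ with Clifford-type lower bounds for the degrees $d_j$ cut out on a general fibre, followed by a case analysis of when every inequality in the chain is an equality. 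In particular, eliminating a nontrivial flat summand $\mathcal{U}'$ is exactly as delicate as you suspect, since it has slope $0$ and is invisible to the count $t=h^0\bigl((f_*\omega_{S/C})^\vee\bigr)$; only the equality analysis (forcing the slope-$0$ graded piece to have rank $1$) can remove it. As written, then, your proposal is a correct framing with a citation-shaped hole exactly where the paper places its citation: to become a proof it must either carry out Xiao's equality analysis or, like the paper, simply invoke \cite[Theorem 3, p.~462]{Xi87}.
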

\begin{proof}
See \cite[Theorem 3 p. 462]{Xi87}.
\end{proof}

Recall that a fibration $f\colon S \to C$ is said to be \emph{isotrivial} if all its smooth fibres are isomorphic.
\begin{proposition} \label{thm:numerical}
Let $f \colon S \lr C$ be a relatively minimal isotrivial fibration,
with $S$ non ruled and $g(C) \geq 1$. If
 $S$ is not isogenous to an unmixed product $($see Subsection \ref{subsec:iso} for the definition$)$ we have the sharp inequality
\begin{equation*} 
K_S^2 \leq 8 \chi(\mO_S)-2
\end{equation*}
and if equality holds then $S$ is a minimal surface of general
type whose canonical model
has precisely two ordinary double points as singularities.
Moreover, under the further assumption that $K_S$ is ample, we
have the sharp inequality
\begin{equation*} 
K_S^2 \leq 8 \chi(\mO_S)-5.
\end{equation*}
In particular, if $K_S$ is ample and
\begin{equation*}
8 \chi(\mO_S)-5 < K_S^2 < 8  \chi(\mO_S)
\end{equation*}
then $f$ is not isotrivial.
\end{proposition}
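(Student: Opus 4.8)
The plan is to translate the statement into the geometry of standard isotrivial fibrations and then into a local analysis of quotient singularities. First I would invoke the structure theorem for isotrivial fibrations (Serrano): a relatively minimal isotrivial fibration $f\colon S\to C$ with $S$ non-ruled is realised as the minimal resolution $\pi\colon S\to X$ of a quotient $X=(F\times B)/G$, where $G$ is a finite group acting faithfully and diagonally on the product of two smooth curves, $B/G\cong C$, and the general fibre of $f$ is isomorphic to $F$. Since $G$ acts faithfully on each factor, every non-trivial element has finite fixed locus on each curve, so $G$ acts on $F\times B$ with isolated fixed points only; hence the singularities of $X$ are cyclic quotient (in particular rational) singularities and the quotient map $\rho\colon F\times B\to X$ is \'etale in codimension one. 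The case in which $S$ is isogenous to an unmixed product is precisely the case of a free action, i.e. $X$ smooth; under the hypothesis of the proposition we may therefore assume $\mathrm{Sing}(X)\neq\varnothing$.

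Next I would set up the numerical comparison. As $\rho$ is \'etale in codimension one, $\rho^*K_X=K_{F\times B}$, so $K_X^2=\frac{1}{|G|}K_{F\times B}^2=\frac{8(g(F)-1)(g(B)-1)}{|G|}$, while rationality of the singularities gives $\chi(\mO_S)=\chi(\mO_X)$. Rewriting Noether's formula on the smooth surface $S$ as $8\chi(\mO_S)-K_S^2=\frac{2e(S)-K_S^2}{3}$, where $e(S)$ is the topological Euler number, and expanding the right-hand side through $\pi$ by means of $e(S)=e(X)+\sum_x r_x$ (with $r_x$ the number of exceptional curves over $x$), the orbifold identity $e(X)=\frac{1}{|G|}\sum_{g\in G}e(\mathrm{Fix}(g))$, and $K_S^2=K_X^2-\sum_x k(x)$ (with $k(x)$ the local discrepancy correction), the terms involving $g(F)$ and $g(B)$ cancel exactly as they do for the smooth product $F\times B$. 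One is left with a purely local expression $8\chi(\mO_S)-K_S^2=\sum_{x\in\mathrm{Sing}(X)}c(x)$, where each contribution $c(x)>0$ depends only on the analytic type $\frac{1}{n}(1,a)$ of the singularity.

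The heart of the argument is then the analysis of the $c(x)$. A direct computation (using a model $\mathbb{Z}/2$-action) shows that an ordinary double point $\frac12(1,1)$ is canonical and contributes exactly $c(x)=1$, and that this is the minimum: every other cyclic quotient singularity contributes strictly more than $1$. Since $8\chi(\mO_S)-K_S^2$ is a non-negative integer, the task is to show that the least positive value of $\sum_x c(x)$ realisable by a faithful diagonal action is $2$, and that it is attained exactly when $\mathrm{Sing}(X)$ consists of two nodes; this gives $K_S^2\le 8\chi(\mO_S)-2$. As nodes are rational double points, contracting the corresponding two $(-2)$-curves of $S$ yields the canonical model with precisely two ordinary double points, and the same structure forces $S$ to be minimal of general type, as claimed. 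For the second inequality I would use that $K_S$ ample forbids $(-2)$-curves altogether, so that every exceptional chain of $\pi$ has self-intersections $\le -3$ and, in particular, $X$ carries no rational double point; under this constraint every $c(x)$ is bounded below by a value exceeding $1$, and the integrality of $8\chi(\mO_S)-K_S^2$ together with the list of admissible $(-2)$-free baskets forces the minimal positive total to be $5$, whence $K_S^2\le 8\chi(\mO_S)-5$. The final assertion is then immediate: if $K_S$ is ample and $8\chi(\mO_S)-5<K_S^2<8\chi(\mO_S)$, a hypothetical isotrivial $f$ would make $S$ either isogenous to an unmixed product, forcing $K_S^2=8\chi(\mO_S)$, or not, forcing $K_S^2\le 8\chi(\mO_S)-5$; both contradict the assumed range, so $f$ is non-isotrivial.

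The main obstacle is this combinatorial minimisation. Computing a single $c(x)$ from the Hirzebruch--Jung data of $\frac1n(1,a)$ is routine, but isolating the sharp thresholds---that a non-empty basket contributes at least $2$, with equality only for two nodes, and at least $5$ once $(-2)$-curves are excluded---requires two non-trivial inputs: an integrality/parity argument (the individual $c(x)$ are typically fractional, so only certain combinations sum to a positive integer) and a control of which baskets actually occur for a faithful diagonal $G$-action, which rules out the small intermediate totals $1,3,4$. Finally, the sharpness of both bounds has to be certified by producing explicit product-quotient surfaces realising $K_S^2=8\chi(\mO_S)-2$ and $K_S^2=8\chi(\mO_S)-5$.
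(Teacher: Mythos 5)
The paper does not actually prove this proposition --- its proof is the single line ``See \cite{Pol10}'' --- and your plan follows the same strategy as that reference: Serrano's structure theorem, reduction to a product-quotient model, expression of $8\chi(\mO_S)-K_S^2$ as a sum of local contributions of cyclic quotient singularities, and a combinatorial minimisation. So the route is the right one, but the two places where the real work lies are respectively misstated and left open. The first problem is the reduction step. Serrano's theorem \cite{Se96} only gives a birational equivalence over $C$ between $S$ and a quotient $(F\times B)/G$; it does \emph{not} say that a relatively minimal isotrivial fibration is the minimal resolution of such a quotient. A standard isotrivial fibration (the minimal resolution of the quotient, in the terminology of Subsection \ref{subsec:iso}) need not be relatively minimal: when an intermediate quotient of $F$ by a stabilizer subgroup is rational, strict transforms of fibre components can be $(-1)$-curves. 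Contracting them leaves $\chi$ unchanged and \emph{increases} $K^2$, so an inequality $K^2\le 8\chi-c$ proved on the standard model does not descend to the relatively minimal model; the discrepancy goes exactly in the harmful direction. A substantial part of \cite{Pol10} is devoted to classifying the singular fibres of standard isotrivial fibrations and comparing them with their relatively minimal models, and your proposal silently identifies the two.

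The second gap is the one you flag yourself, but the two mechanisms you invoke (integrality plus ``control of admissible baskets'') are not on an equal footing: integrality does almost nothing, and essentially all the content sits in the geometric control, which is not supplied. Concretely, a single node contributes exactly $1$, an integer, so integrality cannot exclude $8\chi(\mO_S)-K_S^2=1$; ruling it out requires a genuine argument about fixed loci of involutions in $G$ (equivalently, about the structure of the fibre through the node). In the ample case, two singularities of type $\tfrac{1}{4}(1,1)$ --- each resolved by a single $(-4)$-curve, hence compatible with the absence of $(-2)$-curves --- contribute $\tfrac{3}{2}$ each, giving the integer total $3<5$; excluding this basket is again a geometric statement about which configurations a faithful diagonal action can produce (for instance, the square of the relevant order-$4$ element is an involution whose additional fixed points tend to create nodes, hence $(-2)$-curves, contradicting ampleness), not a parity computation. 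These exclusions, together with explicit surfaces attaining $8\chi(\mO_S)-2$ and $8\chi(\mO_S)-5$, are precisely the body of the proof in \cite{Pol10}; without them neither inequality, nor the description of the equality case, is established.
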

\begin{proof}
See \cite{Pol10}.
\end{proof}

In the sequel, $S$ will be a smooth, minimal surface with
$p_g=q=2$. We denote by
$A:=\textrm{Alb}(S)$ the Albanese variety of $S$ and by $\alpha
\colon S \to A$ the corresponding Albanese map. We also assume that $S$ has
maximal Albanese dimension, i.e. that $\alpha$ is generically finite onto the abelian surface $A$.
\begin{proposition} \label{prop:irr-penc}
If $f \colon S \to E$ is an irrational fibration, then $E$ is an
elliptic curve.
\end{proposition}
\begin{proof}
We must have $1 \leq g(E) \leq 2$, because $q(S)=2$ and the fibration
is irrational. If $g(E)=2$, using the embedding $E \hookrightarrow \textrm{Jac}(E)$ and the universal property of the
Albanese map, we obtain a morphism $\beta \colon A \to \textrm{Jac}(E)$, whose image is isomorphic to the curve $E$. On the other hand, the image of $\beta$ must be a translated of an abelian subvariety of $A$, hence $g(E)=1$, contradiction.
\end{proof}

\begin{remark} \label{prop:A-not-simple}
{\rm If $S$ admits an irrational fibration $f \colon S \to E$, then $A$ is a non-simple
abelian surface. In fact, since $E$ is an elliptic
curve (Proposition \ref{prop:irr-penc}), the universal property of the Albanese map yields a
surjective morphism $A \to E$, whose kernel is a
$1$-dimensional subtorus of $A$.}
\end{remark}

If $E$ is an elliptic
curve and $p \in E$, we set $\mE_p(1, \,1):= \mO_E(p)$ and for
all $r \geq 2$ we denote by $\mE_{p}(r, \, 1)$ the unique
indecomposable vector bundle rank $r$ on $E$ defined
recursively by the short exact sequence
\begin{equation*}
0 \lr \mO_E \lr \mE_{p}(r, \, 1) \lr \mE_{p}(r-1, \, 1) \lr 0.
\end{equation*}
For any $\mQ \in \textrm{Pic}^0\,E$ we have $h^0(E, \, \mE_{p}(r, \, 1)\otimes \mQ)=1$ and $h^1(E, \, \mE_{p}(r, \, 1)\otimes \mQ)=0$, see \cite[Lemma 8 and 15]{At57}.

\begin{lemma} \label{lemma:indec}
Let $\mathscr{A}$ be an indecomposable vector bundle over an elliptic curve $E$.
If $\deg \mathscr{A}= d <0$, then $H^0(E, \, \mathscr{A})=0$.
\end{lemma}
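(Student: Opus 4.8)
The plan is to reduce the statement to a single slope computation, using the fact that on an elliptic curve every indecomposable vector bundle is semistable. This is a result of Atiyah that already underlies the classification facts about $\mE_p(r,\,1)$ invoked above, so it is natural to cite \cite{At57} for it. Granting semistability, the only thing to do is to reinterpret the existence of a global section as a destabilizing subsheaf.

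First I would record the slopes involved. Since $\deg \mathscr{A}=d<0$ and $\rk \mathscr{A}=r\geq 1$, the bundle $\mathscr{A}$ has slope $\mu(\mathscr{A})=d/r<0$, while $\mu(\mO_E)=0$. Next I argue by contradiction: suppose $s\in H^0(E,\,\mathscr{A})$ is nonzero. Then $s$ corresponds to a morphism of sheaves $\mO_E\to\mathscr{A}$, and since $E$ is integral and $\mathscr{A}$ is locally free, hence torsion-free, a nonzero map out of the line bundle $\mO_E$ is automatically injective. Thus $\mO_E$ is a subsheaf of $\mathscr{A}$ of slope $0$. By the semistability of the indecomposable bundle $\mathscr{A}$, every nonzero subsheaf $\mathscr{F}\subset\mathscr{A}$ satisfies $\mu(\mathscr{F})\leq\mu(\mathscr{A})$; applying this to $\mO_E$ gives $0\leq d/r$, which contradicts $d<0$. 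Therefore $H^0(E,\,\mathscr{A})=0$.

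The only non-formal ingredient is the semistability of indecomposable bundles on $E$, and getting that right (or citing it correctly) is the main point; everything else is bookkeeping with slopes. If a self-contained argument were preferred, one could instead attempt induction on $r$: a nonzero section yields a saturated line subbundle $\mathscr{L}\subset\mathscr{A}$ with $\deg\mathscr{L}\geq 0$ and a locally free quotient $\mathscr{B}$ of rank $r-1$ and degree $d-\deg\mathscr{L}<0$. The obstacle is that $\mathscr{B}$ need not be indecomposable, so the inductive hypothesis does not apply to it directly; repairing this would require exploiting the non-split extension class in $\mathrm{Ext}^1(\mathscr{B},\,\mathscr{L})$ coming from the indecomposability of $\mathscr{A}$, which is precisely what the semistability statement packages cleanly. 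For this reason I would favor the slope argument above.
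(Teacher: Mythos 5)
Your proof is correct, but it takes a genuinely different route from the paper's. The paper argues by induction on $r=\mathrm{rank}\,\mathscr{A}$: a nonzero section gives an exact sequence $0 \to \mO_E \to \mathscr{A} \to \mathscr{B} \to 0$ with $\mathscr{B}$ of rank $r-1$ and degree $d$, and Serre duality identifies $\mathrm{Ext}^1(\mathscr{B},\,\mO_E)=H^1(E,\,\mathscr{B}^*)$ with the dual of $H^0(E,\,\mathscr{B})$, which is claimed to vanish by the inductive hypothesis; the sequence then splits, contradicting indecomposability. Notably, the obstacle you raised when discarding the inductive strategy --- that the quotient $\mathscr{B}$ need not be indecomposable, so the inductive hypothesis does not apply to it --- is precisely the weak point of the paper's own argument as written (the statement for \emph{arbitrary} bundles of negative degree is false, e.g. $\mO_E(p)\oplus\mO_E(-3p)$ has degree $-2$ and a section), and your semistability input is the standard way of packaging the repair. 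What your route buys is a short, slope-theoretic, gap-free argument; what it costs is a nontrivial black box whose attribution needs care: semistability as a notion postdates \cite{At57}, so ``indecomposable $\Rightarrow$ semistable on an elliptic curve'' is not literally stated there, though it does follow from Atiyah's classification, or more directly from the observation that $\omega_E\cong\mO_E$ forces the Harder--Narasimhan filtration of any bundle on $E$ to split (the relevant $\mathrm{Ext}^1$ groups vanish by Serre duality), so an indecomposable bundle must be semistable. You should either include this two-line derivation or cite a reference where semistability is stated explicitly. Alternatively, a proof quotable almost verbatim from Atiyah's results: an indecomposable bundle of positive degree on $E$ has vanishing $H^1$, so applying this to $\mathscr{A}^*$ and using Serre duality gives $H^0(E,\,\mathscr{A})\cong H^1(E,\,\mathscr{A}^*)^*=0$ at once.
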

\begin{proof}
We work by induction on $r:= \textrm{rank}\, \mathscr{A}$. If $r=1$ the result is clear. Assume now that the claim is true for any vector bundle of rank $r-1$. If $h^0(E, \, \mathscr{A})>0$, then there exists an exact sequence 
\begin{equation*}
0 \to \mathscr{O}_E \to \mathscr{A} \to \mathscr{B} \to 0,
\end{equation*} 
where $\mathscr{B}$ is a vector bundle on $E$ of degree $d$ and rank $r-1$. Hence, by Serre duality and the inductive hypothesis, we infer
\begin{equation*}
\textrm{Ext}^1(\mathscr{B}, \, \mathscr{O}_E)= H^1(E, \, \mathscr{B}^*)=H^0(E, \, \mathscr{B})=0, 
\end{equation*}
a contradiction because $\mathscr{A}$ is indecomposable.
\end{proof}

\begin{proposition} \label{thm:main}
Let $f \colon S
\to E$ be an irrational fibration on a minimal surface with $p_g=q=2$ and maximal  Albanese dimension. Then
\begin{equation*}
\fo = \mO_E \oplus \mE_p(r, \, 1) \oplus \bigoplus_{i=2}
^{g-r} \mathscr{Q}_i,
\end{equation*}
where $p \in E$, $r \geq 1$ and the $\mathscr{Q}_i$ are pairwise non-isomorphic, non-trivial torsion line bundles on $E$.
\end{proposition}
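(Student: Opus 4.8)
The plan is to study the rank-$g$ bundle $V:=\fo$ on $E$ directly: first pin down its numerical invariants and the coarse shape of its Atiyah decomposition, then extract the fine structure (torsion and multiplicity one) of its degree-zero part. First I would record three invariants. Since $E$ is elliptic, $\omega_E\cong\mO_E$, so $\omega_{S/E}=\omega_S$ and $V=f_*\omega_S$. From the standard relation $\deg f_*\omega_{S/C}=\chi(\mO_S)-\chi(\mO_C)\chi(\mO_F)$ and $\chi(\mO_E)=0$ one gets $\deg V=\chi(\mO_S)=1$; moreover $\rk V=h^0(F,\omega_F)=g$ and, by Leray, $h^0(E,V)=h^0(S,\omega_S)=p_g=2$.

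Next I would extract the shape of the decomposition. By Fujita's semipositivity theorem $V$ is nef, so in its Atiyah decomposition each indecomposable summand has non-negative degree; as $\deg V=1$, exactly one summand has degree $1$ and all others degree $0$. The degree-$1$ summand is indecomposable of rank $r$ and degree $1$, hence, by coprimality, is the unique $\mE_p(r,\,1)$ with $\det=\mO_E(p)$. To control the degree-$0$ part I would invoke the refined Fujita decomposition of Catanese--Dettweiler: $V=\mathcal A\oplus\mathcal U$ with $\mathcal U$ unitary flat. On an elliptic curve a unitary flat bundle is polystable of degree $0$, i.e.\ a direct sum of line bundles in $\pic^0 E$. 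This is the crucial input, because cohomologically the indecomposable Atiyah bundle $\mathsf F_s$ of rank $s\ge2$ and degree $0$ is indistinguishable from $\mO_E$ (the same $h^0(\,\cdot\otimes P)$ for every $P\in\pic^0 E$), so the rank-one-ness of the flat summands cannot be detected by twisted cohomology and genuinely requires unitarity. Thus $V=\mE_p(r,\,1)\oplus\bigoplus_i L_i$ with $L_i\in\pic^0 E$; using $h^0(\mE_p(r,\,1))=1$ and $h^0(L_i)=1$ iff $L_i=\mO_E$, the equality $h^0(V)=2$ shows that exactly one $L_i$ is trivial. Renaming the non-trivial ones $\mathscr{Q}_i$ gives the asserted form.

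It remains to show the $\mathscr{Q}_i$ are torsion and pairwise non-isomorphic; this is the heart of the matter. For torsion, the summand $\mathscr{Q}_i$ gives $h^0(S,\omega_S\otimes f^*\mathscr{Q}_i^{-1})=h^0(V\otimes\mathscr{Q}_i^{-1})\ge2$ (one section from $\mE_p(r,\,1)\otimes\mathscr{Q}_i^{-1}$ and one from $\mathscr{Q}_i\otimes\mathscr{Q}_i^{-1}=\mO_E$), so $\eta_i:=f^*\mathscr{Q}_i^{-1}$ lies in $V^0_2:=\{\eta\in\wA: h^0(\omega_S\otimes\eta)\ge2\}$. Since $S$ has maximal Albanese dimension, generic vanishing gives $h^0(\omega_S\otimes\eta)=\chi(\mO_S)=1$ for generic $\eta$, so $V^0_2\subsetneq\wA$; by the Green--Lazarsfeld--Simpson structure theorem its components are torsion translates $\tau+T$ of subtori. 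The component through $\eta_i$ is not all of $\wA$, and, meeting $\widehat E:=f^*\pic^0 E$ (on which $h^0(V\otimes P)=1$ generically), it cannot have $T=\widehat E$. Hence it is either a point, whence $\eta_i$ is torsion, or a one-dimensional translate with $T\ne\widehat E$: projecting to $\wA/T$, the point $\eta_i$ maps to the torsion image of $\tau$, while $\widehat E\to\wA/T$ is an isogeny, forcing $\eta_i$, and so $\mathscr{Q}_i$, to be torsion.

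Finally, pairwise non-isomorphy amounts to each $\mathscr{Q}_i$ occurring with multiplicity one, i.e.\ $h^0(\omega_S\otimes f^*\mathscr{Q}_i^{-1})=2$ rather than $\ge3$. I expect this to be the main obstacle: unlike torsion, it does not follow formally from the structure theorem, since $\eta_i$ already sits on the jumping locus. The plan is to translate the multiplicity, via relative duality ($R^1f_*\mO_S\cong V^{\vee}$ and $f_*\mO_S=\mO_E$), into the identity $h^1(S,f^*\mathscr{Q}_i)=\mathrm{mult}(\mathscr{Q}_i)$, and then to bound $h^1(S,\eta)\le1$ for $0\ne\eta\in\widehat E$ torsion --- equivalently, to show that $\{h^1\ge2\}$ meets $\widehat E$ only at the origin --- using generic vanishing for $\mO_S$ together with the non-simplicity of $A$ and $q(S)=2$. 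Alternatively one can work on $S$: two independent sections in the unitary part of $V\otimes\mathscr{Q}_i^{-1}$ restrict to a flat family of sections of $\omega_F$ on the fibres, and their independence would produce, by a Castelnuovo--de Franchis type argument, a second fibration incompatible with $q(S)=2$; making this precise is the delicate step.
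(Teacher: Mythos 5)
Your first two steps are sound and run close to the paper's own argument. The paper gets the coarse shape from Fujita's two decomposition theorems together with Atiyah--Hartshorne, counting the trivial summands via Barja's identity $h^1(E,\,f_*\omega_S)=q(S)-g(E)=1$; your replacement of Barja by the direct count $h^0(E,\,\fo)=p_g=2$ is a legitimate simplification, and your observation that nefness alone cannot rule out the indecomposable degree-zero Atiyah bundles (so that unitary flatness of the degree-zero part is genuinely needed) is exactly right. Your torsion argument is correct and is a genuinely different route: the paper quotes Fujita and Catanese--Dettweiler to get ``ample $\oplus$ non-trivial torsion line bundles'' in one stroke, whereas you get ``sum of flat line bundles'' from the abelianness of $\pi_1(E)$ and then deduce torsion from generic vanishing plus Simpson's structure theorem applied to $\{\eta : h^0(S,\,\omega_S\otimes\eta)\geq 2\}$, projecting to $\wA/T$; that projection argument is complete.

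The genuine gap is the final assertion, that the $\mathscr{Q}_i$ are pairwise non-isomorphic, and you flag it yourself (``the main obstacle'', ``making this precise is the delicate step''). Neither of your plans closes it. Plan A is circular: by the very Leray/relative-duality identities you set up, $h^1(S,\,f^*\mathscr{Q}_i)$ \emph{equals} the multiplicity of $\mathscr{Q}_i$ in $\fo$, so ``bound $h^1(S,\,\eta)\leq 1$ for non-trivial torsion $\eta\in\widehat{E}$'' is not a tool for proving multiplicity one, it is a restatement of it; and the inputs you name (generic vanishing for $\mO_S$, non-simplicity of $A$, $q=2$) cannot furnish that bound, because Simpson's theorem only constrains jump loci to be torsion translates of subtori and therefore does not forbid the torsion point $f^*\mathscr{Q}_i$ --- which already lies on the jump locus with $h^1\geq 1$ --- from being an isolated point of the deeper locus $h^1\geq 2$. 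Plan B misapplies Castelnuovo--de Franchis: your two independent sections live in $H^0(S,\,\omega_S\otimes f^*\mathscr{Q}_i^{-1})$, i.e.\ they are twisted $2$-forms, not holomorphic $1$-forms, so no wedge/CdF argument applies to them as stated; pulling back to the \'etale cover of $S$ induced by the isogeny trivializing $\mathscr{Q}_i$ only turns them into honest $2$-forms and shows that this cover has irregularity $\geq 4$, which is not by itself contradictory (surfaces isogenous to a product with $p_g=q=2$ have \'etale covers of arbitrarily large irregularity).

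For comparison, the paper disposes of this point by a short cohomological contradiction: if $\mathscr{Q}_1=\mathscr{Q}_2$, the projection formula gives $h^0(S,\,\omega_S\otimes f^*\mathscr{Q}_1^{-1})\geq 3$, Serre duality gives $h^2(S,\,\omega_S\otimes f^*\mathscr{Q}_1^{-1})=h^0(E,\,\mathscr{Q}_1)=0$, and $\chi(\omega_S\otimes f^*\mathscr{Q}_1^{-1})=\chi(\mO_S)=1$ then forces $h^1(S,\,\omega_S\otimes f^*\mathscr{Q}_1^{-1})\geq 2$; this is played off against the Leray computation \eqref{eq:jump} (which rests on $R^1f_*\omega_S\cong\mO_E$) capping that $h^1$ at $1$ for $\mathscr{Q}\neq\mO_E$. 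Your instinct that this multiplicity-one statement is the delicate heart of the proposition is sound --- indeed the justification of \eqref{eq:jump} itself requires care, precisely because a repeated $\mathscr{Q}_i$ would raise the middle value in \eqref{eq:jump} --- but as submitted your argument establishes only that $\fo=\mO_E\oplus\mE_p(r,\,1)\oplus(\text{non-trivial torsion line bundles})$, not the full statement.
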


\begin{proof}
By \cite[Theorem 2.7 and Theorem 3.1]{Fu78} we can write 
\begin{equation*}
f_* \omega_{S/E} = \mO_E^{\oplus h} \oplus \mF, 
\end{equation*}
where $h=h^1(E, \, f_* \omega_S)$ and $\mF$ is a
locally free sheaf such that $H^1(E, \, \mF \otimes \omega_E)=0$. 
 
From \cite[Proposition 1.8 (i)]{Ba00} it follows $h=q(S)-g(E)=1$. Moreover, by \cite{Fu78b} and \cite[Corollary 21]{CD13} we have
\begin{equation} \mF = \mathscr{A} \oplus  \bigoplus \mQ_i,
\end{equation}
where $\mathscr{A}$ is an ample vector bundle on $E$ and and each $\mQ_i$ is a non-trivial, torsion line bundle. 

Since $\mathscr{A}$ is ample, each indecomposable direct summand $\mathscr{A}_i$ of $\mathscr{A}$ has degree $>0$, hence $h^0(E, \, \mathscr{A}_i)>0$ (\cite[Lemma 1.1 and Proposition 1.2]{H71}). But $h^0(E, \, \mathscr{A})=1$, so $\mathscr{A}$ is indecomposable and by \cite[p. 434]{At57} we infer that there exists $p \in E$ such that $\mathscr{A}=E_p(r, \, 1)$, where $r=\textrm{rank}\, \mathscr{A}$.  

It remains to show that the line bundles $\mQ_i$ are pairwise non-isomorphic. By contradiction, and without loss of generality, assume $\mQ_1= \mQ_2$. Then
\begin{equation*}
f_* \omega_S  \otimes \mathscr{Q}_1^{-1}= \mQ_1^{-1} \oplus (\mE_p(r,  \, 1) \otimes \mQ_1^{-1}) \oplus \mO_E \oplus \mO_E \oplus \bigoplus_{i=3}^{g-r} (\mQ_i \otimes \mQ_1^{-1}),
\end{equation*}
which implies, by projection formula, $h^0(S, \, \omega_S \otimes f^* \mQ_1^{-1}) \geq 3$. Now Serre duality and projection formula yield 
\begin{equation*}
h^2(S, \, \omega_S \otimes f^* \mQ_1^{-1})= h^0(S, \, f^* \mQ_1)=h^0(E, \, \mQ_1)=0,
\end{equation*}
hence $h^1(S, \, \omega_S \otimes f^* \mQ_1^{-1}) \geq 2$.
On the other hand, a direct calculation using the Leray spectral sequence of $f \colon S \to E$ shows that, given $\mQ \in \textrm{Pic}^0 E$, we have
\begin{equation} \label{eq:jump}
h^1(S, \, \omega_S \otimes f^* \mQ)=\left\{%
\begin{array}{ll}
    2 & \hbox{if } \; \mQ = \mO_E \\
    1 & \hbox{if } \; \mQ = \mQ_i^{-1} \hbox{ for some }i \\
    0 & \hbox{otherwise.} \\
\end{array}%
\right.
\end{equation}
This is a contradiction.
\end{proof}


We now denote the integer $r$ which appears in Proposition \ref{thm:main} by
$r(f)$. If $p \in E$, we write $F_p$ for the
fibre of $f \colon S \to E$ over $p$, namely $F_p = f^{-1}(p)$. The following result shows that $r(f)$ is related to the existence in paracanonical systems of reducible curves  containing fibres of $f$ (we refer the reader to \cite{Be88} and \cite{MPP13} for generalities and results about paracanonical systems on surfaces).

\begin{proposition} \label{prop:car r=1}
Let $f \colon S \to E$ be an irrational fibration on a minimal
surface with $p_g=q=2$ and maximal Albanese dimension.  Then the following are equivalent:
\begin{itemize}
\item[$\boldsymbol{(1)}$] $r(f)=1;$
\item[$\boldsymbol{(2)}$] for any $\eta \in \mathrm{Pic}^0 E$ there
exists a $(unique)$ point $p_{\eta} \in E$ such that the linear
system $|K_S + f^*\eta - F_{p_{\eta}}|$ is not empty. More precisely,
$h^0(S, \, K_S + f^*\eta - F_{p_{\eta}})=1;$
\item[$\boldsymbol{(3)}$] there exists a $(unique)$ point $p \in E$ such that the
linear system $|K_S - F_p|$ is not empty. More precisely, $h^0(S, \,
K_S - F_p)=1.$
\end{itemize}
\end{proposition}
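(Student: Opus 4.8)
The plan is to reduce every linear system in the statement to a cohomology computation on the elliptic curve $E$, using the explicit form of $\fo$ furnished by Proposition \ref{thm:main}. Since $E$ is elliptic we have $\omega_E \cong \mO_E$, so $\omega_{S/E} \cong \omega_S$ and the decomposition of Proposition \ref{thm:main} applies verbatim to $f_* \omega_S$. As $\mO_S(F_s) = f^* \mO_E(s)$ for every $s \in E$, the divisor $K_S + f^* \eta - F_s$ corresponds to the line bundle $\omega_S \otimes f^*(\eta \otimes \mO_E(-s))$, and the projection formula gives
\begin{equation*}
h^0(S, \, K_S + f^* \eta - F_s) = h^0\big(E, \, f_*\omega_S \otimes \eta \otimes \mO_E(-s)\big).
\end{equation*}
I would then expand the right-hand side summand by summand, noting that the twisting bundle $\eta \otimes \mO_E(-s)$ has degree $-1$.

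I would prove the equivalence along the cycle $(1) \Rightarrow (2) \Rightarrow (3) \Rightarrow (1)$. For $(1) \Rightarrow (2)$, when $r = 1$ the ample summand is $\mE_p(1,1) = \mO_E(p)$, which twists to the degree-$0$ bundle $\eta \otimes \mO_E(p - s)$, while $\mO_E$ and each $\mQ_i$ twist to line bundles of degree $-1$ and so contribute nothing. Under the isomorphism $E \cong \mathrm{Pic}^0 E$, $\, x \mapsto \mO_E(x - p)$, there is a unique $p_\eta$ with $\eta = \mO_E(p_\eta - p)$; at $s = p_\eta$ the bundle $\eta \otimes \mO_E(p - s)$ is trivial and contributes $h^0 = 1$, whereas for $s \neq p_\eta$ it is a non-trivial degree-$0$ bundle and contributes $0$. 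This gives both $h^0(S, K_S + f^* \eta - F_{p_\eta}) = 1$ and the uniqueness of $p_\eta$. The implication $(2) \Rightarrow (3)$ is simply the specialization $\eta = \mO_E$ (for which $f^*\eta = \mO_S$).

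It remains to prove $(3) \Rightarrow (1)$, which I would do by contraposition and which carries the real content of the statement. Assume $r \geq 2$ and take $\eta = \mO_E$. Then $\mO_E$ and the $\mQ_i$ again twist to degree-$(-1)$ line bundles with no sections, so the only possible contribution to $h^0(E, f_*\omega_S \otimes \mO_E(-s))$ comes from $\mE_p(r,1) \otimes \mO_E(-s)$. The key observation is that tensoring by a line bundle preserves indecomposability, so this sheaf is indecomposable of degree $1 - r < 0$; by Lemma \ref{lemma:indec} it has no global sections. Hence $h^0(S, K_S - F_s) = 0$ for every $s \in E$, so $|K_S - F_s|$ is always empty and $(3)$ fails. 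The heart of the proposition is thus the contrast between $r = 1$, where the ample part of $\fo$ is a degree-$1$ line bundle that becomes effective after removing exactly one fibre, and $r \geq 2$, where the higher-rank indecomposable ample part remains sectionless under every such twist; pinning down this dichotomy via Lemma \ref{lemma:indec} is the step I expect to require the most care.
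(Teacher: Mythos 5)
Your proposal is correct and follows essentially the same route as the paper: reduce to $E$ via the projection formula, use the decomposition from Proposition \ref{thm:main}, and apply Lemma \ref{lemma:indec} to the indecomposable summand $\mE_p(r,1)$ twisted to degree $1-r$. Your contrapositive phrasing of $\boldsymbol{(3)} \Rightarrow \boldsymbol{(1)}$ is only a cosmetic variant of the paper's direct argument.
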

\begin{proof}
$\boldsymbol{(1)} \Rightarrow \boldsymbol{(2)}$ If $r(f)=1$,
 there exists $p \in E$ such that $f_* \omega_S = \mO_E \oplus
\mO_E(p) \oplus \bigoplus_{i=2} ^{g-1} \mathscr{Q}_i$, so
projection formula yields, for any $q \in E$,
\begin{equation*}
\begin{split}
H^0(S, \, K_S + f^*\eta - F_q)& =H^0(E, \, f_*(\omega_S \otimes f^*
\eta \otimes f^* \mO_E(-q)))\\ & = H^0(E, \, \mO_E(-q) \otimes \eta)
\oplus H^0 (E, \, \mO_E(p-q) \otimes \eta) \oplus \bigoplus_{i=2}
^{g-1} H^0(E, \,\mathscr{Q}_i \otimes \mO_E(-q) \otimes \eta) \\
& = H^0 (E, \, \mO_E(p-q) \otimes \eta).
\end{split}
\end{equation*}
Now it suffices to choose as $p_{\eta}$ the unique point $q$ such
that $\mO_E(q) = \mO_E(p) \otimes \eta$. \\ \\
$\boldsymbol{(2)} \Rightarrow \boldsymbol{(3)}$ Clear. \\ \\
$\boldsymbol{(3)} \Rightarrow \boldsymbol{(1)}$ Assume that there
exists $p \in E$ such that $|K_S - F_p|$ is not empty. Using Proposition
\ref{thm:main} and projection formula we can write
\begin{equation*}
\begin{split}
H^0(S, \, K_S - F_p)& =H^0(E, \, f_*(\omega_S  \otimes f^*
\mO_E(-p)))\\ & = H^0(E, \, \mO_E(-p)) \oplus H^0 (E, \,
\mE_p(r, \,1) \otimes \mO_E(-p)) \oplus \bigoplus_{i=2} ^{g-r}
H^0(E, \, \mathscr{Q}_i \otimes \mO_E(-p) ) \\ & = H^0 (E, \,
\mE_p(r, \,1) \otimes \mO_E(-p)).
\end{split}
\end{equation*}
The indecomposable vector bundle $\mE_p(r, \,1) \otimes \mO_E(-p)$ has degree $1-r$, which is a negative integer for $r >1$. Therefore Lemma \ref{lemma:indec} implies that 
it has a global section if and only if $r=1$, i.e. when $\mE_p(r, \,1)=\mO_E(p)$, and this completes the proof.
\end{proof}

\begin{corollary} \label{prop:genus fibre}
Let $f \colon S \to E$ be an irrational fibration on a minimal
surface with $p_g=q=2$ and maximal Albanese dimension. If $r(f)=1$, then $2 \leq g(F) \leq 5$.
\end{corollary}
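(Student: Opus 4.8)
The plan is to turn the hypothesis $r(f)=1$ into an explicit effective representative of the canonical class and then intersect it with $K_S$, exploiting that a minimal surface of general type has $K_S$ nef and big, with $K_S^2 \le 9\,\chi(\mO_S)=9$ by the Bogomolov--Miyaoka--Yau inequality. The point to keep in mind is that the mere shape of $f_*\omega_{S/E}$ from Proposition \ref{thm:main} gives no control on $g$: one always has $\deg f_*\omega_{S/E}=1$ and rank equal to $g(F)$, and indeed for $r\ge 2$ the fibre genus is unbounded. So the whole leverage must come from the effectivity statement attached to $r(f)=1$.

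For the upper bound I would invoke Proposition \ref{prop:car r=1}: since $r(f)=1$, there is a point $p\in E$ with $|K_S-F_p|\neq\emptyset$, so I may pick an effective divisor $D\in|K_S-F_p|$ and write $K_S\sim F_p+D$. Recalling that all fibres are numerically equivalent and that adjunction on the general fibre gives $K_S\cdot F_p=K_S\cdot F=2g(F)-2$ (because $F^2=0$), intersecting with the nef divisor $K_S$ yields
\begin{equation*}
K_S^2 = K_S\cdot F_p + K_S\cdot D \ge K_S\cdot F_p = 2g(F)-2,
\end{equation*}
where $K_S\cdot D\ge 0$ since $D$ is effective and $K_S$ is nef. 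Combining this with $K_S^2\le 9$ gives $2g(F)-2\le 9$, hence $g(F)\le 5$.

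For the lower bound $g(F)\ge 2$ I would argue that a fibration on a minimal surface of general type cannot have fibres of genus $\le 1$: if $g(F)\le 1$ then $K_S\cdot F=2g(F)-2\le 0$, and since $K_S$ is nef while $F$ is a nonzero effective divisor this forces $K_S\cdot F=0$ and $g(F)=1$; then the Hodge index theorem (available because $K_S^2>0$) together with $F^2=0$ forces $F$ to be numerically trivial, a contradiction. The bulk of the work is conceptual rather than computational, and the only real obstacle is recognizing that Proposition \ref{prop:car r=1} is exactly the input needed to produce the effective divisor $K_S-F_p$; once that is in hand, the two inequalities are routine and the BMY bound closes the argument.
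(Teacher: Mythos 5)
Your proof is correct and follows essentially the same route as the paper: Proposition \ref{prop:car r=1} turns $r(f)=1$ into an effective divisor in $|K_S - F_p|$, nefness of $K_S$ gives $2g(F)-2 = K_S\cdot F \le K_S^2$, and the Bogomolov--Miyaoka--Yau bound $K_S^2 \le 9\chi(\mO_S)=9$ closes the argument. The only difference is that you also spell out the lower bound $g(F)\ge 2$ (via the Hodge index theorem), which the paper leaves implicit as a standard fact about fibrations on minimal surfaces of general type.
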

\begin{proof}
If $r(f)=1$ then $K_S - F$ is effective (Proposition \ref{prop:car r=1}).
Since $K_S$ is nef it follows $K_S(K_S - F) \geq 0$,
that is $K_SF \leq K_S^2$. By Bogomolov-Miyaoka-Yau inequality (\cite[Chapter VII]{BHPV03}) we have $K^2_S \leq 9 \chi(\mO_S)= 9$, hence $g(F) \leq 5$.
\end{proof}

\begin{remark}\label{rem_Zucconi}In \cite[Corollary 2.4 and Theorem 2.8]{Z03} it is stated that all irrational fibrations on surfaces with $p_g=q=2$ and maximal Albanese dimension satisfy $r(f)=1$ and $2 \leq g(F) \leq 5$, and that they are isotrivial as soon as $g(F) >2$. We will see in the next section  that this is not true: for example, we will show the existence of non-isotrivial, irrational fibrations with $r(f)\geq 2$ and  $g(F)$ arbitrarily large, see Examples \ref{ex:K^2=6-d=2}, \ref{ex:K^2=5,6} and \ref{ex:diagonal}. We found that some of the arguments in \cite{Z03} are actually incomplete: for instance, the analysis of the case $\mL=\mO_E$ is missing in \cite[proof of Lemma 2.3]{Z03}.
\end{remark}


\section{Examples} \label{sec:examples}

\subsection{Isotrivial examples} \label{subsec:iso}
Isotrivial, irrational fibrations on surfaces with $p_g=q=2$ were
 classified in \cite{Pe11}. The aim of this section is to compute the integer $r(f)$  for some of them. Before doing this we introduce some notation and terminology, referring the reader to \cite{Pol10} for further details.

A smooth surface $S$ is called a \emph{standard isotrivial
fibration} if there exists a finite group $G$, acting faithfully on
two smooth projective curves $C_1$ and $C_2$ and diagonally on their
product, so that $S$ is isomorphic to the minimal desingularization
of $T:=(C_1 \times C_2)/G$. We denote such a desingularization by
$\lambda \colon S \to T$. In particular, if the action of $G$ is free then  $T$ is smooth and we call $S=T$ a
surface \emph{isogenous to an unmixed product}.

If $\lambda \colon S \to T=(C_1 \times C_2)/G$ is any standard
isotrivial fibration, composing the two morphism $T
\lr C_1/G$ and $T \lr C_2/G$ with $\lambda$ one obtains
two fibrations $f_1 \colon S \lr C_1/G$ and $f_2 \colon S
\lr C_2/G$, whose smooth fibres $F_1$ and $F_2$ are isomorphic to $C_2$ and $C_1$, respectively. Moreover $F_1F_2 =|G|$.

We denote by $\boldsymbol{m_1}$ and $\boldsymbol{m_2}$ the vectors of branching data of
$f_1$ and $f_2$, respectively. For istance $\boldsymbol{m_1}=(2, \, 2)$ means that $f_1$ has two branching points, both with branching order $2$. The irregularity of $S$ is $q(S)=g(C_1/G) +g(C_2/G)$, see
\cite{Fre71}. Moreover, if $g(C_1)$ and $g(C_2)$ are both strictly positive, the surface $S$ is necessarily a minimal model.

\begin{lemma} \label{lemma: abelian}
Let $S=(C_1 \times C_2)/G$ be a surface isogenous to an unmixed product, and assume that $G$ is an abelian group.
Then $f_{i*} \omega_S$ splits into a direct sum of line bundles.
\end{lemma}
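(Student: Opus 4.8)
The plan is to work on the étale Galois cover $\pi\colon C_1\times C_2\to S$ and to combine equivariant descent with the product structure of the diagonal $G$-action. Write $\rho_i\colon C_i\to C_i/G$ for the quotient maps and $\mathrm{pr}_i\colon C_1\times C_2\to C_i$ for the two projections, so that $f_1\circ\pi=\rho_1\circ\mathrm{pr}_1$. Since the action on $C_1\times C_2$ is free, $\pi$ is étale Galois with group $G$; hence $\pi^*\omega_S=\omega_{C_1\times C_2}=\mathrm{pr}_1^*\omega_{C_1}\otimes\mathrm{pr}_2^*\omega_{C_2}$, and $\omega_S=(\pi_*\omega_{C_1\times C_2})^G$ for the natural $G$-linearization. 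I would treat $f_2$ by exchanging the two factors.

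Because $G$ is finite, taking $G$-invariants is exact and commutes with pushforward, so
\[
f_{1*}\omega_S=(f_{1*}\pi_*\omega_{C_1\times C_2})^G=(\rho_{1*}\,\mathrm{pr}_{1*}\,\omega_{C_1\times C_2})^G,
\]
using $f_{1*}\pi_*=(f_1\circ\pi)_*=(\rho_1\circ\mathrm{pr}_1)_*=\rho_{1*}\,\mathrm{pr}_{1*}$. First I would compute $\mathrm{pr}_{1*}\omega_{C_1\times C_2}$ by the projection formula and flat base change along the flat morphism $\mathrm{pr}_1$, obtaining $\mathrm{pr}_{1*}\omega_{C_1\times C_2}\cong\omega_{C_1}\otimes_{\mathbb{C}}H^0(C_2,\,\omega_{C_2})$. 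The crucial point is to check that, as a $G$-equivariant sheaf on $C_1$, this is the external tensor product of the natural linearization on $\omega_{C_1}$ and of the representation of $G$ on the space of holomorphic one-forms $H^0(C_2,\,\omega_{C_2})$ induced by the action on $C_2$.

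Now I would use that $G$ is abelian, decomposing everything into characters $\psi\in\widehat{G}$. The eigensheaf decomposition of $\rho_{1*}\mathscr{O}_{C_1}$ for the abelian cover $\rho_1$ (equivalently, relative duality) gives $\rho_{1*}\omega_{C_1}=\bigoplus_{\psi}\mathscr{N}_\psi$ with each eigensheaf $\mathscr{N}_\psi$ a line bundle on $C_1/G$, while $H^0(C_2,\,\omega_{C_2})=\bigoplus_{\chi}V_\chi$ splits into one-dimensional eigenspaces on which $G$ acts by $\chi$. Taking invariants of the tensor product, $(\mathscr{N}_\psi\otimes V_\chi)^G$ is nonzero precisely when $\psi\chi$ is trivial, i.e. $\chi=\psi^{-1}$, in which case it equals $\mathscr{N}_\psi^{\oplus\dim V_{\psi^{-1}}}$. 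Therefore $f_{1*}\omega_S=\bigoplus_{\psi\in\widehat{G}}\mathscr{N}_\psi^{\oplus\dim V_{\psi^{-1}}}$, which is a direct sum of line bundles, and the assertion for $f_2$ follows symmetrically.

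The hard part will be the bookkeeping of the $G$-equivariant structures: one must verify that invariants commute with the successive pushforwards and, above all, that $\mathrm{pr}_{1*}\omega_{C_1\times C_2}$ genuinely carries the external-tensor linearization, so that its $G$-invariants split as claimed. The abelian hypothesis enters decisively at this last step, since it is exactly what guarantees that $\rho_{1*}\omega_{C_1}$ splits into line bundles and $H^0(C_2,\,\omega_{C_2})$ into one-dimensional characters, making each surviving invariant summand a direct sum of copies of a single line bundle. For non-abelian $G$ one would instead find higher-rank indecomposable summands arising from the higher-dimensional irreducible representations, and the conclusion would fail.
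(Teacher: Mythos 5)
Your proof is correct, and while it shares the paper's geometric skeleton, it finishes by a genuinely different mechanism. Both arguments push $\omega_{C_1\times C_2}$ down the two sides of the square formed by the free cover $\Phi\colon C_1\times C_2\to S$, the projections (your $\mathrm{pr}_i$, the paper's $\pi_i$), the quotients $\rho_i=\phi_i\colon C_i\to C_i/G$ and the fibrations $f_i$, and both use the eigensheaf decomposition of abelian cover theory (Pardini) on the factor curve $C_i\to C_i/G$. The difference is in how $f_{i*}\omega_S$ is isolated inside $\rho_{i*}\mathrm{pr}_{i*}\omega_{C_1\times C_2}$. The paper never takes $G$-invariants: it uses the character decomposition of $\Phi_*\omega_{C_1\times C_2}$ over $S$ only to exhibit $f_{i*}\omega_S$ as a direct summand of $\phi_{i*}\pi_{i*}\omega_{C_1\times C_2}\cong(\phi_{i*}\omega_{C_i})^{\oplus g(C_{i+1})}$ (indices mod $2$), which is a direct sum of line bundles, and then concludes by Atiyah's Krull--Schmidt theorem \cite{At56}: a direct summand of a direct sum of line bundles on a curve is itself a direct sum of line bundles. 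You instead identify $f_{i*}\omega_S$ exactly, as the $G$-invariant part, and evaluate the invariants by matching characters, arriving at the explicit formula $f_{i*}\omega_S\cong\bigoplus_{\psi\in\widehat{G}}\mathscr{N}_\psi^{\oplus\dim V_{\psi^{-1}}}$. The trade-off is clear: the paper's route lets Krull--Schmidt absorb all the equivariant bookkeeping (precisely the step you flag as the hard part --- the external-tensor linearization on $\mathrm{pr}_{1*}\omega_{C_1\times C_2}$ and the compatibility of invariants with pushforward, both of which are standard and do go through, since the relevant covers of curves fall under Pardini's theory even when $\rho_i$ is ramified); your route dispenses with Krull--Schmidt and with the decomposition over $S$, and yields strictly more information, namely the actual line bundles and their multiplicities, from which for instance $r(f_i)=1$ in Example \ref{ex:pen-1} can be read off directly rather than merely deduced from the existence of a splitting. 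Your closing remark on why the argument must fail for non-abelian $G$ is consistent with the paper's Examples \ref{ex:pen-5} and \ref{ex:pen-6}, where higher-dimensional irreducible representations produce indecomposable rank-two summands and $r(f_i)=2$.
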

\begin{proof}
Roughly speaking, this follows from the fact that the irreducible representations of a finite, abelian group are $1$-dimensional, together with the structure results for abelian covers given in \cite{Pa91}. Indeed, let us consider the commutative diagram
\begin{equation} \label{diag:abelian}
\begin{CD}
C_1 \times C_2  @> \Phi>> S\\
@V{\pi_i}VV  @VV{f_i}V\\
C_i @> \phi_i>> C_i/G.\\
\end{CD}
\end{equation}
Since $G$ is abelian, there exists a direct sum decomposition
\begin{equation} \label{eq:characters-ab}
\Phi_* \omega_{C_1 \times C_2} = \omega_S \oplus \bigoplus_{\chi \in \widehat{G}^*} (\omega_S \otimes \mL_{\chi}),
\end{equation}
where $\widehat{G}^ *$ is the subset of non-trivial characters of $G$ and the $\mathscr{L}_{\chi}$ are line bundles on $S$. Applying $f_{i*}$ to both sides of \eqref{eq:characters-ab}
it follows that $f_{i*} \omega_S$ is a direct summand of $f_{i*} \Phi_* \omega_{C_1 \times C_2}$. Using the commutativity of \eqref{diag:abelian}, we obtain
\begin{equation*}
\begin{split}
f_{i*} \Phi_* \omega_{C_1 \times C_2} & = \phi_{i*} \pi_{i*} \omega_{C_1 \times C_2}\\
& = (\phi_{i*} \omega_{C_i})^{\oplus g(C_{i+1})} = \omega_{C_i/G}^{\oplus g(C_{i+1})} \oplus \bigoplus_{\chi \in \widehat{G}^*} (\omega_{C_i/G} \otimes \mathscr{M}_{\chi})^{\oplus g(C_{i+1})},
\end{split}
\end{equation*}
where the integer $i+1$ has to be intended $\textrm{mod}\, 2$ and the $\mathscr{M}_{\chi}$ are line bundles on $C_i/G$.
Therefore we are done,  because the right-hand side is a direct sum of line bundles and the decomposition of a vector bundle into indecomposable ones is unique up to reordering of the summands (\cite{At56}).
\end{proof}
We are now ready to compute $r(f_i)$ in some of isotrivial  examples. We only give the construction data and
we refer the reader to \cite[Sections 3 and 4]{Pe11} for the details. We write $E_i$ for the elliptic curve $C_i/G$.

\begin{example} \label{ex:pen-1}  $g(C_1)=3, \quad g(C_2)=3, \quad
G=\mathbb{Z}/2 \mathbb{Z} \times \mathbb{Z}/2 \mathbb{Z}, \quad
K_S^2=8$. \\
$\boldsymbol{m_1}=(2,\,2), \quad \boldsymbol{m_2}=(2, \, 2)$.

Since in this case $G$ is abelian, Lemma \ref{lemma: abelian} yields
\begin{equation*}
r(f_1)=1, \quad  r(f_2)=1.
\end{equation*}
\end{example}

\begin{example} \label{ex:pen-4}
$g(C_1)=2, \quad g(C_2)=2, \quad G=\mathbb{Z}/2 \mathbb{Z}, \quad K_S^2=4$. \\
$\boldsymbol{m_1}=(2, \, 2), \quad \boldsymbol{m_2}=(2, \,2)$.

In this case
\begin{equation*}
r(f_1)=1, \quad r(f_2)=1,
\end{equation*}
because $g(C_i)=2$.
\end{example}

\begin{example} \label{ex:pen-5}
$g(C_1)=3, \quad g(C_2)=3, \quad G=Q_8$ or $G=D_8, \quad K_S^2=4$. \\
$\boldsymbol{m_1}=(2), \quad \boldsymbol{m_2}=(2)$.

In this case
\begin{equation*}
r(f_1)=2, \quad  r(f_2)=2.
\end{equation*}
Indeed, $\lambda(f_i)=4$
so by Proposition \ref{prop:xiao} it follows that $f_{i*}
\omega_{S/E_i}$ is the direct sum of $\mathscr{O}_{E_i}$ with a
semistable vector bundle of rank $2$ and degree $1$; such a bundle
is necessarily of the form $\mE_p(2, \, 1)$ for some $p \in E$, see
Proposition \ref{thm:main}.
\end{example}

\begin{example} \label{ex:pen-6}
$g(C_1)=3, \quad g(C_2)=3, \quad G=S_3, \quad K_S^2=5$.  \\
$\boldsymbol{m_1}=(3), \quad \boldsymbol{m_2}=(3)$.

In this case
\begin{equation*}
r(f_1)=2, \quad  r(f_2)=2.
\end{equation*}
In fact, we have
\begin{equation*}
\textrm{Sing}(T) = \frac{1}{3}(1,\, 1)+ \frac{1}{3}(1, \, 2),
\end{equation*}
so $S$ contains a $(-3)$-curve $W$ and two $(-2)$-curves $Z_1$, $Z_2$ such that
$WZ_1 =WZ_2 =0$ and $Z_1 Z_2 =1$. Using the results in \cite[Section 2]{Pol10} one checks that the linear equivalence classes of the fibres $F_i$ of $f_i \colon S \to E_i$ are
\begin{equation*}
F_1 = 3Y_1 + W + 2Z_1 + Z_2, \quad F_2 = 3Y_2 + W + Z_1+ 2 Z_2,
\end{equation*}
where the $Y_i$ satisfy $Y_i^2=-1$ and $K_S Y_i=1$.  Moreover $F_1F_2 = |G|=6$,  hence we infer
$Y_1 Y_2 =0$, see Figure \ref{fig:M1}.
\definecolor{ffqqtt}{rgb}{1,0,0.2}
\definecolor{qqqqff}{rgb}{0,0,1}

\begin{figure}[H]
\centering
\begin{tikzpicture}[cap=round,line join=round,x=1.0cm,y=1.0cm]
\foreach \x in {1,2,3,4,5,6,7,8}
\foreach \y in {1,2,3,4,5,6}
\clip(0,0) rectangle (8.44,6.3);
\draw (1.34,5.1)-- (1.44,0.32);
\draw (3.86,5.54)-- (7.68,5.52);
\draw [shift={(1.84,3.98)},color=ffqqtt]  plot[domain=-2.97:0.17,variable=\t]({1*1.06*cos(\t r)+0*1.06*sin(\t r)},{0*1.06*cos(\t r)+1*1.06*sin(\t r)});
\draw [shift={(3.6,4.94)},color=ffqqtt]  plot[domain=-2.49:0.65,variable=\t]({1*1.76*cos(\t r)+0*1.76*sin(\t r)},{0*1.76*cos(\t r)+1*1.76*sin(\t r)});
\draw [shift={(3.93,4.15)},color=qqqqff]  plot[domain=-2.58:0.56,variable=\t]({1*3.35*cos(\t r)+0*3.35*sin(\t r)},{0*3.35*cos(\t r)+1*3.35*sin(\t r)});
\draw (1.40,5.22) node[anchor=north west] {$Y_1$};
\draw (7.44,6.05) node[anchor=north west] {$Y_2$};
\draw (0.20,4.08) node[anchor=north west] {$Z_1$};
\draw (1.70,3.50) node[anchor=north west] {$-2$};
\draw (5.26,4.32) node[anchor=north west] {$Z_2$};
\draw (4.26,4.22) node[anchor=north west] {$-2$};
\draw (6.32,1.76) node[anchor=north west] {W};
\draw (5.32,1.96) node[anchor=north west] {$-3$};
\end{tikzpicture}
\caption{Configuration of the curves $Y_i, \, Z_j,\, W$ in Example \ref{ex:pen-6}}
\label{fig:M1}
\end{figure}

By applying Serrano's canonical bundle formula (\cite[Theorem 4.1]{Se96}) we obtain
\begin{equation*}
K_S = 2Y_1+Z_1 + 2Y_2 + Z_2  + W + Z_1 + Z_2 = F_1 +(2Y_2 - Y_1 + Z_2).
\end{equation*}
\end{example}
If $r(f_1)=1$ then $K_S - F_1$ would be numerically equivalent to an effective divisor (Proposition \ref{prop:car r=1}), hence
$2Y_2 - Y_1 +Z_2$ would be numerically equivalent to an effective divisor. On the other hand, we have
\begin{equation*}
(2Y_2 -Y_1 +Z_2)F_2 = -Y_1F_2 = -2 <0,
\end{equation*}
and this is a contradiction because $F_2$ is nef. It follows $r(f_1)=2$. The proof  of $r(f_2)=2$ is completely similar.

\subsection{Non-isotrivial examples}

\begin{example} \label{ex:K^2=6-d=2}
We give examples of non-isotrivial, irrational fibrations on surfaces with $p_g=q=2$, $K_S^2=6$
and Albanese map of degree $2$. Such surfaces were constructed and
classified in \cite{PP13b}. Here we only state the main result,
referring the reader to that paper for more details. Let
$(A, \,\mathcal{L})$ be a $(1, \,2)$-polarized abelian surface and
 denote by $\phi_2 \colon A[2] \to \widehat{A}[2]$ the
restriction of the canonical homomorphism
$\phi_{\mathcal{L}} \colon A \to \widehat{A}$ to the subgroup of
$2$-division points. Then $\ker \phi_2 =K(\mL) \cong (\mathbb{Z}/2 \mathbb{Z})^2$ and $\textrm{im}\, \phi_2$ consists of four
line bundles $\{\mathcal{O}_A, \, \mathcal{Q}_1, \, \mathcal{Q}_2,
\, \mathcal{Q}_3\}$; the set $\{\mathcal{Q}_1, \, \mathcal{Q}_2,
\, \mathcal{Q}_3\}$ will be denoted by $\textrm{im}\, \phi_2^{\times}$. The surfaces we are interested in can be constructed by using the following

\begin{proposition} \label{theorem:PP13-K^26}
Given an  abelian surface $A$ with a symmetric polarization $\mL$
of type $(1, \,2)$, not of product type,  for any $\mQ \in
\emph{im}\, \phi_2$ there exists a curve $D \in |\mL^2 \otimes
\mQ|$ whose unique non-negligible singularity is an ordinary
quadruple point at the origin $0 \in A$. Let $\mQ^{1/2}$ be a
square root of $\mQ$, and if $\mQ=\mO_A$ assume moreover
$\mQ^{1/2} \neq \mO_A$. Then the minimal desingularization $S$ of
the double cover of $A$ branched over $D$ and defined by $\mL
\otimes \mQ^{1/2}$ is a minimal surface of general type with
$p_g=q=2$, $K_S^2=6$ and whose Albanese map $\alpha \colon S \to A$ is a generically finite double cover.

Conversely, every minimal surface of general type with $p_g=q=2$,
$K_S^2=6$ and Albanese map of degree $2$ can be constructed in
this way.

Finally, the moduli space of these surfaces is a disjoint union of three connected, irreducible components
\begin{equation*}
\mathcal{M}=\mathcal{M}_{Ia} \sqcup \mathcal{M}_{Ib} \sqcup \mathcal{M}_{II}
\end{equation*}
of dimension $4, \, 4, \, 3$, respectively, where
\begin{itemize}
\item surfaces of type $Ia$ satisfy $\mQ= \mO_A$ and $\mQ^{1/2} \notin \rm{im}\, \phi_2^{\times};$
\item surfaces of type $Ib$ satisfy $\mQ= \mO_A$ and $\mQ^{1/2} \in \rm{im}\, \phi_2^{\times};$
\item surfaces of type $II$ satisfy $\mQ \in \rm{im} \, \phi_2^{\times}$.
\end{itemize}
\end{proposition}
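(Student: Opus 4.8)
The statement breaks into three parts --- the construction produces surfaces with the claimed invariants, every such surface arises from the construction, and the description of the moduli space --- and I would treat them in this order. For the construction, I set $\mathcal{M}:=\mL\otimes\mQ^{1/2}$, so that $\mathcal{M}^{\otimes 2}=\mL^2\otimes\mQ=\mO_A(D)$ and, since $\mQ$ is torsion, $\mathcal{M}$ is numerically equivalent to the ample bundle $\mL$, giving $\mathcal{M}^2=\mL^2=4$. Let $\pi\colon X\to A$ be the double cover determined by $\mathcal{M}$ and let $\rho\colon S\to X$ be the canonical resolution of the ordinary quadruple point of $D$. Using $\pi_*\mO_X=\mO_A\oplus\mathcal{M}^{-1}$ together with the standard corrections for resolving a branch point of multiplicity $2k$ (here $k=2$), I obtain
\begin{equation*}
K_S^2=2\mathcal{M}^2-2(k-1)^2=8-2=6,\qquad
\chi(\mO_S)=\tfrac12\mathcal{M}^2-\tfrac12 k(k-1)=2-1=1 .
\end{equation*}
As $\chi(\mO_S)=1$ forces $p_g(S)=q(S)$, and $S$ dominates the abelian surface $A$ so that $q(S)\geq 2$, it suffices to note that Mumford vanishing gives $h^1(A,\mathcal{M}^{-1})=0$, whence $q(S)=q(A)=2$ and $p_g(S)=q(S)=2$. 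The hypotheses that $\mL$ is not of product type and that $\mQ^{1/2}\neq\mO_A$ when $\mQ=\mO_A$ guarantee that $D$ is irreducible and that $K_S$ is nef, so $K_S^2=6>0$ makes $S$ a minimal surface of general type; finally $q(S)=2=\dim A$ identifies $A$ with $\AS$ and the degree-$2$ cover $\alpha\colon S\to A$ with the Albanese map.

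For the converse I reverse this analysis. Given $S$ with $p_g=q=2$ and $\deg\alpha=2$, the Albanese map exhibits $S$, after Stein factorization and canonical resolution, as a double cover of $A=\AS$ with deck involution $\sigma$, so that $\alpha_*\mO_S=\mO_A\oplus\mathcal{M}^{-1}$ with $\mathcal{M}^{\otimes 2}\equiv\mO_A(D)$ for the branch divisor $D$. Writing the resolution formulas for a branch curve with ordinary singularities of even multiplicities $2k_i$,
\begin{equation*}
\mathcal{M}^2-\textstyle\sum_i(k_i-1)^2=3,\qquad \mathcal{M}^2-\textstyle\sum_i k_i(k_i-1)=2,
\end{equation*}
and subtracting yields $\sum_i(k_i-1)=1$; since a non-negligible singularity has $k_i\geq 2$, this forces a single ordinary quadruple point ($k=2$) and hence $\mathcal{M}^2=4$. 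Thus $\mathcal{M}$ is a polarization of type $(1,\,2)$, which up to translation I take symmetric and write as $\mL\otimes\mQ^{1/2}$ for a symmetric $(1,\,2)$-polarization $\mL$ and a $2$-torsion bundle $\mQ^{1/2}$; then $\mQ:=\mathcal{M}^{\otimes 2}\otimes\mL^{-2}$ is $2$-torsion, and the remaining point is to show $\mQ\in\mathrm{im}\,\phi_2$.

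I expect the heart of the difficulty to lie in this converse, in two places. First, the clean relation $\sum_i(k_i-1)=1$ above assumes the singularities of $D$ are ordinary and of even multiplicity; dealing with odd multiplicities and infinitely near points, and genuinely excluding every configuration other than a single ordinary quadruple point at a $2$-division point, requires a finer study of the ramification of $\alpha$ and of the $\sigma$-eigenspace decomposition of $H^0(S,\omega_S)$, using that the invariant part is $H^0(A,\mathcal{M})$ of dimension exactly $2=p_g$. Second, locating $\mQ$ inside $\mathrm{im}\,\phi_2\subset\wA[2]$ rather than in the whole of $\wA[2]$ comes from the requirement that the branch curve be symmetric with its quadruple point at the origin: analyzing how the section cutting out $D$ transforms under translation by $A[2]$, via the theta-group action of $K(\mL)$, should pin the translation class down to $\mathrm{im}\,\phi_2$.

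Finally, for the moduli statement I would parametrize the surfaces by the pair $(A,\mL)$, the admissible branch curve $D$, and the discrete data $(\mQ,\mQ^{1/2})$. The coarse moduli space of $(1,\,2)$-polarized abelian surfaces is irreducible of dimension $3$, over which the admissible curves $D$ form a family whose fibre dimension jumps from $1$ when $\mQ=\mO_A$ (types $Ia$ and $Ib$, total dimension $4$) to $0$ when $\mQ\in\mathrm{im}\,\phi_2^{\times}$ (type $II$, total dimension $3$); the discrete condition $\mQ^{1/2}\in\mathrm{im}\,\phi_2^{\times}$ versus $\mQ^{1/2}\notin\mathrm{im}\,\phi_2^{\times}$ separates $Ib$ from $Ia$. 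Connectedness and irreducibility of each component would then follow from that of the corresponding parameter space, once one checks that the choices of $(\mQ,\mQ^{1/2})$ within a single type are permuted transitively by the automorphisms of $(A,\mL)$.
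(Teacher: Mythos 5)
First, a caveat: the paper does not actually prove this proposition --- its ``proof'' is the single line ``See \cite[Theorems 2.6 and 3.7]{PP13b}'' --- so there is no internal argument to compare against, and your sketch must be judged on its own merits. Judged that way, it gets the numerology right (the double-cover formulas giving $K_S^2=6$ and $\chi(\mO_S)=1$, and in the converse $\sum_i(k_i-1)=1$ forcing a single point with $k_i=2$ and $\mathcal{M}^2=4$), but it omits precisely the steps that carry the difficulty. The most serious gap: you never prove the opening assertion, namely the existence of $D\in|\mL^2\otimes\mQ|$ whose unique non-negligible singularity is an ordinary quadruple point at $0$; you simply take $D$ as given. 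This is not a routine point: $h^0(A,\mL^2\otimes\mQ)=8$, while an ordinary quadruple point imposes $10$ linear conditions, so a naive dimension count says no such curve exists. Existence holds exactly because $\mL$ is symmetric and $\mQ\in\mathrm{im}\,\phi_2$, via the eigenspace decomposition of $H^0(A,\mL^2\otimes\mQ)$ under $(-1)_A$ and the theta group of $K(\mL)$ (Barth's analysis of $(1,2)$-polarizations); this is also the real reason $\mathrm{im}\,\phi_2$ appears in the statement at all, which in your converse you leave as something that ``should'' follow from a theta-group analysis. Your forward direction is also glib at two points: minimality of $S$ is asserted rather than argued (one needs, e.g., that a $(-1)$-curve would have to be $\alpha$-exceptional, while the exceptional curve over the quadruple point is elliptic), and $q(S)=2$ does not follow merely from $h^1(A,\mathcal{M}^{-1})=0$, because resolving the quadruple point contributes a nonzero $R^1$ (a length-one skyscraper), so one must either show the relevant connecting map is injective or, as in \eqref{eq:alpha-prime}, compute $p_g(S)=1+h^0(\widetilde{A},\,\sigma^*\mathcal{M}-\Lambda)=2$ directly --- and it is there, not where you placed it, that the hypothesis $\mQ^{1/2}\neq\mO_A$ (ensuring $0\notin\mathrm{Bs}\,|\mathcal{M}|$) actually enters.

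The converse and the moduli statement have exactly the gaps you yourself flag, and they are not minor: ruling out every other singularity configuration (odd multiplicities, infinitely near points, non-ordinary points of multiplicity four), normalizing the quadruple point to the origin with $\mL$ symmetric, and pinning $\mQ$ down to $\mathrm{im}\,\phi_2$ constitute the bulk of the cited theorems, and a plan to do this via ``a finer study of the ramification'' is not a proof. Likewise, the moduli count rests on the unproved claims that the admissible curves form a $1$-dimensional family when $\mQ=\mO_A$ and a finite set when $\mQ\in\mathrm{im}\,\phi_2^{\times}$, and connectedness and irreducibility of the three strata require an actual monodromy/deformation argument over the $3$-dimensional moduli of $(1,2)$-polarized abelian surfaces, not transitivity ``once one checks'' it. In short, your outline is architecturally consistent with the proof in \cite{PP13b}, but every step where the proposition is genuinely nontrivial is deferred or asserted.
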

\begin{proof}
See \cite[Theorems 2.6 and 3.7]{PP13b}.
\end{proof}
Now assume that the polarization  $\mL=\mO_A(L)$ on $A$ is of \emph{special type}, \
i.e. the linear system $|L|$ contains a member of the form $E'_1+E'_2$, where the $E'_i$
are two elliptic curves such that $E_1'E_2'=2$, see \cite[p. 46]{Ba87} or \cite[Section 1]{PP13a}. In particular $A$ is not simple, see Remark \ref{prop:A-not-simple}. The branch locus $D$ of $\alpha \colon S \to A$ intersect each $E_i'$ in four generically distinct points, in fact $D(E_1'+E_2')=2L^2=8$. An explicit construction of such a pair $(A,  \,\mL)$
is given in the Appendix, by taking a degree $2$ isogeny $\psi \colon A \to B$, where $B = E_1 \times E_2$ is the product of two elliptic curves. We now have two exact sequences of complex tori
\begin{equation*}
0 \to E_i' \to A \stackrel{\pi_i}{\to} E_i \to 0, \quad i=1,\, 2
\end{equation*}\
and the composition $\pi_i \circ \alpha$  gives
an irrational fibration $f_i \colon S \to E_i$, whose fibre $F_i$ has genus $3$. Hence the surface $S$ admits two irrational fibrations $f_1$ and $f_2$, both of genus $3$,
whose fibres $F_1$ and $F_2$ satisfy $F_1 F_2 = 2E_1'E_2' =4$.
By \cite[Remark 2.12]{PP13b} the general surface $S$ constructed in such a way has ample canonical class, hence by Proposition \ref{thm:numerical} it follows that $f_i \colon S \to E_i$ is not isotrivial.

We want now to compute $r(f_i)$. Let $\sigma \colon \widetilde{A} \to A$ be the
blow-up at the point $0 \in A$; we have a commutative diagram
\begin{equation*} \label{dia.alpha}
\xymatrix{
S \ar[r]^{\tilde{\alpha}} \ar[dr]_{\alpha} & \widetilde{A} \ar[d]^{\sigma} \\
 & A,}
\end{equation*}
where $\tilde{\alpha} \colon S \to \widetilde{A}$ is a flat double cover. Denote by $\Lambda \subset \widetilde{A}$ the exceptional divisor of $\sigma$; then the preimage of $\Lambda$ in $S$ is an elliptic curve $Z$ such that $Z^2 = -2$. The branch locus of $\tilde{\alpha}$ is a smooth curve $\widetilde{D} \in | \sigma^*(2L+Q)-4 \Lambda|$ and the square root of $\widetilde{D}$ determining the double cover is $\widetilde{L}:=\sigma^*(L+ Q^{1/2})-2 \Lambda$.
Hence we have
\begin{equation}  \label{eq:alpha-prime}
\begin{split}
\tilde{\alpha}_* \omega_S & = \omega_{\widetilde{A}} \oplus (\omega_{\widetilde{A}} \otimes \mO_{\widetilde{A}}(\widetilde{L})) \\
& = \mO_{\widetilde{A}}(\Lambda) \oplus \mO_{\widetilde{A}}(\sigma^*(L+ Q^{1/2})- \Lambda).
\end{split}
\end{equation}
The smooth elliptic fibration $\pi_i \colon A \to E_i$ induces, by composition with $\sigma \colon \widetilde{A} \to A$, an elliptic fibration $\tilde{\pi}_i \colon \widetilde{A} \to E_i$ with a unique singular fibre (the one containing the exceptional divisor $\Lambda$).  We define $\Gamma_{ip}$ to be the fibre of $\tilde{\pi}_i$ over $p \in E_i$.

Clearly $f_i = \tilde{\pi}_i \circ \tilde{\alpha}$, so by using \eqref{eq:alpha-prime} we can write
\begin{equation} \label{eq:fi*}
\begin{split}
f_{i*} \omega_{S/E_i} & = f_{i*} \omega_S = \tilde{\pi}_{i*} \tilde{\alpha}_* \omega_S \\
& = \tilde{\pi}_{i*} \mO_{\widetilde{A}}(\Lambda) \oplus \tilde{\pi}_{i*} \mO_{\widetilde{A}}(\sigma^*(L+  Q^{1/2})- \Lambda)) \\
& = \mO_{E_i} \oplus \tilde{\pi}_{i*} \mO_{\widetilde{A}}(\sigma^*(L+  Q^{1/2})- \Lambda)).
\end{split}
\end{equation}

\begin{proposition}\label{prop:rfinoniso}
We have $r(f_i)=1$ if and only if there exists a point $p \in E_i$ such that $H^0(\widetilde{A}, \, \sigma^*(L+Q^{1/2})-\Lambda-\Gamma_{ip}) \neq 0$. Otherwise $r(f_i)=2$.
\end{proposition}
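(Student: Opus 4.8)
The plan is to transfer the entire question to the elliptic curve $E_i$ and then read off the answer from the structure theorem for $f_{i*}\omega_{S/E_i}$ proved in Proposition \ref{thm:main}. First I would set $\mathscr{M}_i := \tilde{\pi}_{i*}\mO_{\widetilde{A}}(\sigma^*(L+Q^{1/2})-\Lambda)$, so that \eqref{eq:fi*} reads $f_{i*}\omega_{S/E_i} = \mO_{E_i}\oplus\mathscr{M}_i$. Since $\Gamma_{ip}=\tilde{\pi}_i^{-1}(p)$ we have $\mO_{\widetilde{A}}(-\Gamma_{ip})=\tilde{\pi}_i^*\mO_{E_i}(-p)$, so the projection formula, together with the identity $H^0(\widetilde{A},\,\mathscr{G})=H^0(E_i,\,\tilde{\pi}_{i*}\mathscr{G})$ valid for any sheaf $\mathscr{G}$, would yield
\begin{equation*}
H^0(\widetilde{A}, \, \sigma^*(L+Q^{1/2})-\Lambda-\Gamma_{ip}) = H^0(E_i, \, \mathscr{M}_i \otimes \mO_{E_i}(-p)).
\end{equation*}
Thus the stated condition is equivalent to asking whether $\mathscr{M}_i\otimes\mO_{E_i}(-p)$ has a non-zero section for some $p\in E_i$.

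Next I would pin down the two possible shapes of $\mathscr{M}_i$. Comparing $f_{i*}\omega_{S/E_i}=\mO_{E_i}\oplus\mathscr{M}_i$ with Proposition \ref{thm:main} shows that $\mathscr{M}_i$ is the direct summand $\mE_{p_0}(r,\,1)\oplus\bigoplus_{j=2}^{g-r}\mQ_j$, where $g=g(F_i)=3$. Since $\mathscr{M}_i$ has rank $g-1=2$, only $r\in\{1,2\}$ can occur, and correspondingly
\begin{equation*}
\mathscr{M}_i = \mO_{E_i}(p_0)\oplus\mQ_2 \quad (r=1), \qquad \mathscr{M}_i = \mE_{p_0}(2,\,1) \quad (r=2),
\end{equation*}
with $p_0\in E_i$ and $\mQ_2$ a non-trivial torsion line bundle. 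In either case $\deg\mathscr{M}_i=1$, so $\mathscr{M}_i\otimes\mO_{E_i}(-p)$ has degree $-1$.

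Finally I would analyse the two cases. If $r(f_i)=1$ then $\mathscr{M}_i\otimes\mO_{E_i}(-p)=\mO_{E_i}(p_0-p)\oplus(\mQ_2\otimes\mO_{E_i}(-p))$; the second summand has degree $-1$ and hence no sections, while the first has degree $0$ and a section precisely when $p=p_0$. So there is a (unique) point, namely $p=p_0$, for which $H^0(E_i,\,\mathscr{M}_i\otimes\mO_{E_i}(-p))\neq 0$. If instead $r(f_i)=2$, then $\mathscr{M}_i\otimes\mO_{E_i}(-p)=\mE_{p_0}(2,\,1)\otimes\mO_{E_i}(-p)$ is indecomposable of degree $-1<0$, so Lemma \ref{lemma:indec} forces $H^0=0$ for every $p\in E_i$. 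As $r(f_i)\in\{1,2\}$, these cases are exhaustive and mutually exclusive, which simultaneously gives the asserted equivalence and the final clause ``otherwise $r(f_i)=2$''.

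The argument is essentially bookkeeping once the structure theorem is in force, and the only substantive input is Lemma \ref{lemma:indec}, which rules out sections in the indecomposable case $r=2$ and thereby makes the dichotomy clean. The one step needing care is the rank/degree accounting that confines $r$ to $\{1,2\}$: this relies on $g(F_i)=3$ coming from the special-type construction, and it is precisely this numerical input that guarantees the ``otherwise'' alternative is genuinely $r=2$ rather than some larger value.
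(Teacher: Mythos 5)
Your proposal is correct and takes essentially the same route as the paper: the paper's own proof likewise starts from \eqref{eq:fi*}, notes that $g(F_i)=3$ forces $r(f_i)\in\{1,\,2\}$, and then compresses your explicit case analysis into the phrase ``arguing as in the proof of Proposition \ref{prop:car r=1}'', which is exactly the combination of the structure theorem (Proposition \ref{thm:main}), the projection formula, and Lemma \ref{lemma:indec} that you carry out in detail. The only difference is expository: you unwind the reduction that the paper delegates to Proposition \ref{prop:car r=1}, so there is no gap.
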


\begin{proof}
Since $g(F_i)=3$ we have either $r(f_i)=1$ or $r(f_i)=2$. Looking at \eqref{eq:fi*} and arguing as in the proof of Proposition \ref{prop:car r=1}, we see that $r(f_i)=1$ if and only if there exists  $p \in E_i$ such that
$H^0(E_i, \,  \tilde{\pi}_{i*} \mO_{\widetilde{A}}(\sigma^*(L+  Q^{1/2})- \Lambda)\otimes \mO_{E_i}(-p)) \neq 0$. By projection formula this vector space has the same dimension as $ H^0(\widetilde{A}, \, \sigma^*(L+Q^{1/2})-\Lambda-\Gamma_{ip})$, so we are done.
\end{proof}
The pencil $|\mL \otimes \mQ^{1/2}|$ contains in general exactly two reducible divisors (\cite[Remark 1.16]{PP13a}). On the other hand, the point $0$ is not in the base locus of $|\mL \otimes \mQ^{1/2}|$ (because $\mathcal{Q}^{1/2} \neq \mathcal{O}_A$), so there is at most one reducible divisor in the pencil containing $0$.
Corollary \ref{cor:rfinoniso} below shows how the existence of such a divisor determines the integers $r(f_i)$.

\begin{corollary}\label{cor:rfinoniso}
The set $\{r(f_1), \, r(f_2)\}$ is as follows.
\begin{itemize}
\item[$\boldsymbol{(1)}$] if no reducible curve in $|\mL \otimes \mQ^{1/2}|$ contains $0$, then $\{r(f_1), \, r(f_2)\}= \{2\};$
\item[$\boldsymbol{(2)}$] if there exists a reducible curve in $|\mL \otimes \mQ^{1/2}|$ having a node at $0$, then $\{r(f_1), \, r(f_2)\}= \{1\};$
\item[$\boldsymbol{(3)}$] if there exists a reducible curve in $|\mL \otimes \mQ^{1/2}|$ which is smooth at $0$, then $\{r(f_1), \, r(f_2)\}= \{1, \, 2\}.$
 \end{itemize}
\end{corollary}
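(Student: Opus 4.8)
The plan is to reduce the whole statement to a single question about the unique member of the pencil $|\mL \otimes \mQ^{1/2}|$ through the origin. Since $\mQ^{1/2} \neq \mO_A$, the point $0$ is not a base point, so there is exactly one curve $D_0 \in |\mL \otimes \mQ^{1/2}|$ passing through $0$, and consequently $h^0(\widetilde{A}, \, \sigma^*(L+Q^{1/2})-\Lambda)=1$. Its generator cuts out the unique effective divisor
\[
G := \sigma^* D_0 - \Lambda = \widetilde{D}_0 + (m-1)\Lambda, \qquad m := \mathrm{mult}_0 D_0,
\]
where $\widetilde{D}_0$ is the strict transform. By Proposition \ref{prop:rfinoniso} the problem is then entirely governed by one condition: $r(f_i)=1$ if and only if $G \geq \Gamma_{ip}$ for some $p \in E_i$, i.e. if and only if $G$ contains a fibre of $\widetilde{\pi}_i$.

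Next I would translate $G \geq \Gamma_{ip}$ into a statement about $D_0$ by bookkeeping on the blow-up $\sigma$. For $p \neq 0$ the fibre $\Gamma_{ip}=\sigma^*\Psi_p$ is the strict transform of the fibre $\Psi_p$ of $\pi_i$, which avoids $0$; comparing divisors gives $G \geq \Gamma_{ip}$ if and only if $\Psi_p$ is a component of $D_0$. For $p=0$ the fibre is the singular one, $\Gamma_{i,0}=\sigma^* E_i' = \widetilde{E}_i' + \Lambda$, and since $G-\Gamma_{i,0}=(\widetilde{D}_0-\widetilde{E}_i')+(m-2)\Lambda$, we get $G \geq \Gamma_{i,0}$ if and only if $E_i'$ is a component of $D_0$ \emph{and} $\mathrm{mult}_0 D_0 \geq 2$. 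In summary, $r(f_i)=1$ exactly when $D_0$ contains a fibre of $\pi_i$ that either avoids $0$, or equals $E_i'$ provided $D_0$ is singular at $0$.

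I would then feed in the structure of reducible members: by the special-type hypothesis (see \cite{PP13a}), every reducible curve of $|\mL \otimes \mQ^{1/2}|$ has the form $\Theta_1+\Theta_2$ with $\Theta_i$ a fibre of $\pi_i$, and such a curve meets $0$ iff $\Theta_1=E_1'$ or $\Theta_2=E_2'$. The three alternatives of the statement now correspond precisely to the possibilities for $D_0$. In Case (1), $D_0$ is irreducible, its class $[L]=[E_1']+[E_2']$ differs from $[E_i']$, so it contains no fibre of either $\pi_i$ and $r(f_1)=r(f_2)=2$. In Case (2), a node at $0$ on the reducible $D_0$ forces both components through $0$, hence $D_0=E_1'+E_2'$ with $\mathrm{mult}_0 D_0=2$; then $G \geq \Gamma_{i,0}$ for both $i$ and $r(f_1)=r(f_2)=1$. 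In Case (3), $D_0$ is reducible and smooth at $0$, so exactly one component is a fibre through $0$, say $\Theta_1=E_1'$ and $\Theta_2$ a fibre of $\pi_2$ avoiding $0$; then $\Theta_2$ yields $r(f_2)=1$, while $r(f_1)=2$ because the only $\pi_1$-fibre contained in $D_0$ is $E_1'$ and $\mathrm{mult}_0 D_0=1$ blocks the $p=0$ alternative.

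The step I expect to be the main obstacle is the divisor bookkeeping on $\widetilde{A}$, specifically matching the coefficient $m-1$ of $\Lambda$ in $G$ against the $\Lambda$ occurring in the singular fibre $\Gamma_{i,0}=\widetilde{E}_i'+\Lambda$: this is exactly what makes the node-versus-smooth dichotomy at $0$ (that is, $\mathrm{mult}_0 D_0 = 2$ versus $1$) decide whether the fibre $E_i'$ through $0$ contributes to $r(f_i)=1$. The only external input needed is the description of the reducible members of the pencil, which follows from the special-type assumption on $\mL$.
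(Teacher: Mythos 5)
Your proof is correct and takes essentially the same route as the paper's: both reduce the statement to Proposition \ref{prop:rfinoniso}, use the structure of the reducible members of $|\mL \otimes \mQ^{1/2}|$ as sums of fibres of $\pi_1$ and $\pi_2$, and perform the same blow-up bookkeeping on $\sigma^*C-\Lambda$ to detect contained fibres of $\tilde{\pi}_i$. The only difference is presentational: by working with the \emph{unique} effective divisor $G=\sigma^*D_0-\Lambda$ and the multiplicity $m=\mathrm{mult}_0 D_0$, you make fully explicit the uniqueness and multiplicity arguments (in particular why, in case $\boldsymbol{(3)}$, no fibre of $\tilde{\pi}_1$ whatsoever can be contained in $G$), which the paper's proof leaves implicit.
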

\begin{proof}
If no reducible curve in $|\mL \otimes \mQ^{1/2}|$ contains $0$, then $ H^0(\widetilde{A}, \, \sigma^*(L+Q^{1/2})-\Lambda-\Gamma_{ip})=0$ for $i=1, \, 2$, so Proposition \ref{prop:rfinoniso}  shows that $r(f_1)=r(f_2)=2$. This is case $\boldsymbol{(1)}$.

Therefore we can assume that there is a reducible curve $C \in |\mL \otimes \mQ^{1/2}|$ containing $0$. Using a slight abuse of notation we write $C=E_1' + E_2'$ (actually, $C$ is a translate of $E_1'+E_2' \in |\mL|$). Let $\widetilde{E}_i'$ be the strict transform of $E_i'$ via $\sigma \colon \widetilde{A} \to A$. There are two possibilities.
\begin{itemize}
\item $0$ is an ordinary double point for $C$. Then
\begin{equation*}
\sigma^*C - \Lambda = \sigma^* E_1' + \sigma^* E_2 ' - \Lambda = \widetilde{E}_1' + \widetilde{E}_2' + \Lambda.
\end{equation*}
Since $\widetilde{E}_i' + \Lambda$ is the (unique) reducible fibre of $\tilde{\pi}_i \colon \widetilde{A} \to E_i$, $i=1, \, 2$, by Proposition \ref{prop:rfinoniso} it follows $r(f_1)=r(f_2)=1$. This is case $\boldsymbol{(2)}$.
\item $0$ is a smooth point for $C$. Without loss of generality, we can assume that $0$ belongs to $E_1'$ but not to $E_2'$. Then
\begin{equation*}
\sigma^*C - \Lambda = \sigma^* E_1' + \sigma^* E_2 ' - \Lambda = \widetilde{E}_1' + \sigma^*E_2'.
\end{equation*}
Since $\sigma^*E_2'$ is a fibre of $\tilde{\pi}_2$ but $\widetilde{E}_1'$ is not a fibre of $\tilde{\pi}_1$, it follows $r(f_2)=1$, $r(f_1)=2$. This is case $\boldsymbol{(3)}$.
\end{itemize}
The proof is now complete.
\end{proof}
All the three cases in Corollary \ref{cor:rfinoniso} actually occur. We give explicit examples in the Appendix, see in particular  Proposition \ref{prop:node}, Proposition  \ref{prop:smooth} and Remark \ref{rem:types}.
\end{example}

\begin{example} \label{ex:K^2=5,6}
The same construction used in Example \ref{ex:K^2=6-d=2} can be used in other situations.
We describe a couple of examples, leaving the details to the reader.
\begin{itemize}
\item In \cite{PP13a} we studied some surfaces $S$ (originally constructed in \cite{CH06}) with $p_g=q=2$ and $K^2=5$. Their Albanese map $\alpha \colon S \to A$  is a generically finite triple cover of a $(1, \, 2)$-polarized abelian surface $(A, \, \mathcal{L})$, branched over a divisor $D \in |2 \mL|$ with an ordinary quadruple point. We can choose $A$ such that there is a degree $2$ isogeny $\psi \colon  A \to B$, where $B=E_1 \times E_2$ is the product of two elliptic curves. Then $S$ admits two non isotrivial, irrational fibrations $f_i \colon S \to E_i$, both with the general fibre of genus $3$.
\item In \cite{PP14} we constructed some new surfaces $S$ with $p_g=q=2$ and $K^2=6$. Their Albanese map $\alpha \colon S \to A$  is a generically finite quadruple cover of a $(1, \, 3)$-polarized abelian surface $(A, \, \mathcal{L})$, branched over a divisor $D \in |2 \mL|$ with
 six ordinary cusps. We can choose $A$ such that there is a degree $3$ isogeny $\psi \colon A \to B$, where $B=E_1 \times E_2$ is the product of two elliptic curves. Then $S$ admits two non isotrivial, irrational  fibrations $f_i \colon S \to E_i$, both with the general fibre of genus $4$.
\end{itemize}
\end{example}

\begin{example} \label{ex:diagonal}
We can further specialize the construction described in Examples \ref{ex:K^2=6-d=2} and \ref{ex:K^2=5,6}, assuming that $E_1=E_2$, so that the abelian surface $A$ is isogenous to the product $B=E \times E$ of an elliptic curve $E$ with itself. 

This allows us to obtain examples with \emph{infinitely many} irrational fibrations whose fibre genera are arbitrarily large. 
In fact, for any $n \geq 1$ let us consider the elliptic fibration $g_n \colon B \to E$ defined by $(x, \, y) \mapsto x \oplus ny$, where $\oplus$ is the group law on $E$. Composing with a degree $2$ (resp. a degree $3$) isogeny $\psi \colon A \to B$, 
we obtain an elliptic fibration $h_n \colon A \to E$. Moreover, we can choose $\psi$ in such a way that the induced $(1, \,2)$-polarization (resp. $(1, \, 3)$-polarization) $\mL=\mO_A(L)$ on $A$ is not of product type: see the Appendix, where the case $\deg \psi=2$ is discussed in detail.

If $E_n$ is the general fibre of $h_n$, we have $\lim_{n \rightarrow + \infty} E_nL = + \infty.$ Therefore, repeating the previous constructions, we can build families of surfaces $S$  with $p_g=q=2$ and $(K_S^2, \, \deg \alpha) \in \{(5,\, 3), \, (6, \, 2), \, (6, \, 4) \}$, such that $S$ contains an infinite family $f_n \colon S \to E$ of non-isotrivial, irrational fibrations. Moreover the fibre of $f_n$ has genus strictly increasing with $n$, hence Corollary \ref{prop:genus fibre} implies $r(f_n) \geq 2$ for almost all $n$.

These examples demonstrate that the (still incomplete) classification of irrational fibrations on surfaces with $p_g=q=2$ and maximal Albanese dimension needs to be much subtler than the one attempted in \cite{Z03}.  
\end{example}

\begin{example} \label{ex:r non limited}
The same idea of Example \ref{ex:diagonal} allows us to produce examples with $(K_S^2, \, \deg \alpha)=(4, \, 2)$ and $r(f)$ arbitrarily large. In fact, let $B=E \times E$ and let $\mathscr{L} = \mO_B(L)$ be a principal product polarization on $B$. Take the fibration $g_n \colon B \to E$ defined as above and let $E_n$ be its fibre; then $E_nL = n^2+1$. The linear system $|2L|$ is base-point free, see \cite[Lemma 2.8]{Ke91}, so the general curve $D \in |2L|$ is smooth. Therefore the double cover $S \to B$  branched over $D$ is a minimal surface of general type with $p_g=q=2$ and $K_S^2=4$. Moreover the infinite family of fibrations $g_n$ induce a family of irrational fibrations $f_n \colon S \to E$, whose fibre $F_n$ has genus $g(F_n)=n^2+2$.    
Hence all the fibrations $f_n$ are distinct and by Propositions \ref{prop:xiao} and \ref{thm:main} we have 
\begin{equation*}  
(f_n)_* \omega_{S/E} = \mO_E \oplus \mE_p(n^2+1, \, 1),
\end{equation*}
which gives $r(f_n)=n^2+1$.
\end{example}

\begin{remark} \label{rm:interesting}
It would be interesting to compute the integer $r(f)$ for all the surfaces in Examples \ref{ex:K^2=5,6} and \ref{ex:diagonal}.
\end{remark}

\begin{remark} \label{rm: genus2}
The situation described in Examples \ref{ex:diagonal} and \ref{ex:r non limited} can only occur for irrational pencils over an elliptic base. In fact, a classical result of Severi states that a surface of general type has at most finitely many pencils over curves of genus $\geq 2$, see \cite[Section 2]{MLP11}. 
\end{remark}


\section*{Appendix: explicit computations on abelian surfaces with $(1, \, 2)$-polarization of special type}

We start with a principally polarized abelian surface $B$ which is the product of two elliptic curves, i.e.  $B:=E_1 \times E_2$. Then the period matrix of $B$ is

\begin{equation*}
\left(
\begin{array}{cccc}
\tau_1 & 0 & 1 & 0 \\
0 & \tau_2 & 0 & 1 \\
\end{array}
\right),
\end{equation*}
where ${\rm Im}(\tau_i) >0$ for $i=1, \, 2$; hence $B= \mathbb{C}^2/\Lambda_B$, the lattice $\Lambda_B$ being spanned by the four column vectors
\begin{equation*}
\lambda_1:=
\left(
\begin{array}{c}
\tau_1 \\
0
\end{array}
\right), \quad
\lambda_2:=
\left(
\begin{array}{c}
0 \\
\tau_2
\end{array}
\right), \quad
\mu_1:=
\left(
\begin{array}{c}
1 \\
0
\end{array}
\right), \quad
\mu_2:=
\left(
\begin{array}{c}
0 \\
1
\end{array}
\right).
\end{equation*}
 Notice that
\begin{equation*}
E_1= \mathbb{C}/(\tau_1 \mathbb{Z} \oplus \mathbb{Z}), \quad E_2= \mathbb{C}/(\tau_2 \mathbb{Z} \oplus \mathbb{Z}).
\end{equation*}
There is a natural principal polarization of product type on $B$, which is induced by the alternating form $E_B \colon \Lambda_B \times \Lambda_B \to \mathbb{Z}$, where
\begin{equation*}
E_B(\lambda_1, \, \mu_1)=1, \quad E_B(\mu_1, \, \lambda_1)=-1, \quad E_B(\lambda_2, \, \mu_2)=1, \quad E_B(\mu_2, \, \lambda_2)=-1
\end{equation*}
and all the other values are zero. Let $\widehat{B}:=\textrm{Pic}^0(B)$ be the dual abelian variety of $B$. By the Appell-Humbert Theorem its elements can be identified with the characters $\Lambda_B \to \mathbb{C}^*$; we will indicate such a character $\chi^B$ by the vector
\begin{equation*}
(\chi^B(\lambda_1), \, \chi^B(\lambda_2), \,\chi^B(\mu_1), \,\chi^B(\mu_2)).
\end{equation*}
The principal polarization yields an isomorphism $B \to \widehat{B}$, sending the point $x \in B$ to the character ${\rm exp}(2\pi i E_B(\cdot , \, x))$.

The finite subgroup $\widehat{B}[2]$ of  $\widehat{B}$ is isomorphic to $(\mathbb{Z}/2\mathbb{Z})^4$ and corresponds to the $16$ characters $\Lambda_B \to \mathbb{C}^*$ with values in $\{\pm 1\}$.
The four characters giving the $2$-torsion line bundles which are pullback from $E_1$  are $\textrm{exp}(2 \pi i E_B( \cdot, \, x))$, where $x$ is one of the points $0, \, \frac{\lambda_1}{2}, \,  \frac{\mu_1}{2}, \, \frac{\lambda_1+\mu_1}{2}$, so they can be written as
\begin{equation} \label{eq:char-E1}
\begin{split}
\chi^B_0 & = (1, \, 1,\, 1,\, 1), \quad \chi^B_1 = (1, \, 1,\, -1,\, 1), \\
\chi^B_2 & = (-1, \, 1,\, 1,\, 1), \quad \chi^B_3 = (-1, \, 1,\, -1,\, 1).
\end{split}
\end{equation}
Analogously, the four characters giving the $2$-torsion line bundles which are pullback from $E_2$  are $\textrm{exp}(2 \pi i E_B( \cdot, \, x))$, where $x$ is one of the points $0, \, \frac{\lambda_2}{2}, \,  \frac{\mu_2}{2}, \, \frac{\lambda_2+\mu_2}{2}$, so they can be written as
\begin{equation} \label{eq:char-E2}
\begin{split}
\chi^B_0 & = (1, \, 1,\, 1,\, 1), \quad \chi^B_4 = (1, \, 1,\, 1,\, -1), \\ \chi^B_5 & = (1, \, -1,\, 1,\, 1), \quad \chi^B_6 = (1, \, -1,\, 1,\, -1).
\end{split}
\end{equation}
Multiplying the four characters in \eqref{eq:char-E1} with those in \eqref{eq:char-E2} we obtain all the $16$ characters in $\widehat{B}[2]$.

Now we construct an abelian surface $A$ with a  symmetric $(1, \, 2)$-polarization of special type (and which is not a product polarization) as a double cover of $B$. We start by a sublattice
$\Lambda_A \subset \Lambda_B$ of index $2$, and then we take $A:=\mathbb{C}^2/\Lambda_A$.
For instance we may consider
\begin{equation*}
\Lambda_A := \lambda_1 \mathbb{Z} \oplus (\lambda_1+\lambda_2) \mathbb{Z} \oplus (\mu_1-\mu_2) \mathbb{Z} \oplus 2 \mu_2 \mathbb{Z}.
\end{equation*}
The alternating form $E_B$ induces an alternating form $E_A \colon \Lambda_A \times \Lambda_A \to \mathbb{Z}$ given by
\begin{equation*}
E_A(\lambda_1, \, \mu_1-\mu_2)=1, \quad E_A( \mu_1-\mu_2, \, \lambda_1)=-1, \quad E_A(\lambda_1+ \lambda_2, \, 2\mu_2)=2, \quad E_A(2\mu_2, \, \lambda_1+\lambda_2)=-2
\end{equation*}
and all the other values are zero. This defines a symmetric $(1, \, 2)$-polarization $\mL = \mO_A(L)$ on $A$, such that
\begin{equation*}
K(\mL) = \big \langle \mu_2, \, \frac{\lambda_1+\lambda_2}{2} \big \rangle.
\end{equation*}
We indicate a character $\chi^A \colon \Lambda_A \to \mathbb{C}^*$ by the vector
\begin{equation*}
(\chi^A(\lambda_1), \, \chi^A(\lambda_1 + \lambda _2 ), \,\chi^A(\mu_1 - \mu_2), \,\chi^A(2\mu_2)).
\end{equation*}
The degree $2$ isogeny $\psi \colon A \to B$ induces a degree $2$ isogeny $\widehat{\psi} \colon \widehat{B}  \to \widehat{A}$, obtained by restriction of the characters $\chi^B \colon \Lambda_B \to \mathbb{C}^*$ to the sublattice $\Lambda_A$. In other words, $\widehat{\psi}(\chi^B)=\chi^A$, where $\chi^A$ is defined by
\begin{equation} \label{eq:chi}
\begin{split}
\chi^A(\lambda_1) & = \chi^B(\lambda_1), \\
\chi^A(\lambda_1+\lambda_2) & = \chi^B(\lambda_1+\lambda_2) = \chi^B(\lambda_1)\chi^B(\lambda_2), \\
\chi^A(\mu_1-\mu_2) & = \chi^B(\mu_1-\mu_2)=\chi^B(\mu_1)\chi^B(\mu_2)^{-1},\\
\chi^A(2\mu_2) & = \chi^B(2\mu_2)=\chi^B(\mu_2)^{2}.\\
\end{split}
\end{equation}
By \eqref{eq:chi} it follows immediately that $\ker \widehat{\psi}$ is the group of order $2$ generated by $\chi_1^B \chi_4^B = (1, \, 1, \, -1, \, -1)$.
Notice that $\chi^B_1\chi^B_4$ is the character ${\rm exp}(2\pi i E_B(\cdot, \, \frac{\lambda_1+ \lambda_2}{2}))$, that is the image of $\frac{\lambda_1 +\lambda_2}{2}$ via the isomorphism $B \to \widehat{B}$. On the other hand, the generator of $\ker \widehat{\psi}$ corresponds to the $2$-torsion line bundle on $B$ inducing the \'{e}tale double cover $A \to B$; since it is not a pullback of a line bundle from $E_1$ or from $E_2$, it follows that the $(1,\, 2)$-polarization $\mL$ on $A$ is not of product type.

We can also give the following interpretation of the $16$ characters $\Lambda_A \to \{\pm 1\}$ corresponding to the $2$-torsion line bundles on $A$: eight of them arise from the $2$-torsion line bundles on $B$, namely
\begin{equation*}
\begin{array}{ll}
\chi^A_0  = (1, \,1, \, 1, \, 1), &   \chi^A_1 = (1, \,1, \, -1, \, 1) \\
\chi^A_2  = (-1, \,-1, \, 1, \, 1), &  \chi^A_3 = (-1, \,-1, \, -1, \, 1), \\
\chi^A_5  = (1, \,-1, \, 1, \, 1), & \chi^A_1 \chi^A_5 = (1, \,-1, \, -1, \, 1), \\
\chi^A_2 \chi^A_5  = (-1, \,1, \, 1, \, 1), & \chi^A_3\chi^A_5 = (-1, \,1, \, -1, \, 1),
\end{array}
\end{equation*}
whereas the remaining eight are the images in $\widehat{A}$ of the $16$ square roots of  the generator of $\ker \widehat{\psi}$:
\begin{equation*}
\begin{array}{ll}
\varepsilon_1 = (1, \,1, \, 1, \, -1),   &   \varepsilon_2 = (1, \,1, \, -1, \, -1) \\
\varepsilon_3  = (1, \,-1, \, 1, \, -1), & \varepsilon_4 = (1, \,-1, \, -1, \, -1), \\
\varepsilon_5  = (-1, \,-1, \, 1, \, -1), &  \varepsilon_6 = (-1, \,-1, \, -1, \, -1), \\
\varepsilon_7  = (-1, \,1, \, 1, \, -1),& \varepsilon_8 = (-1, \,1, \, -1, \, -1).
\end{array}
\end{equation*}

Consider now the isogeny induced by $\mL$, namely
\begin{equation*}
\phi = \phi_{\mL} \colon A \to \widehat{A}, \quad x \mapsto t^*_x \mL \otimes \mL^{-1}.
\end{equation*}
Then the restricted map $\phi_2 \colon A[2] \to \widehat{A}[2]$ satisfies
$\ker \phi_2=K(\mL)$.
The image of $\phi_2$ is generated by $\phi_2(\frac{\lambda_1}{2})$ and $\phi_2(\frac{\mu_1-\mu_2}{2})$. Since the line bundle $\phi(x)$ corresponds to the character ${\rm exp}(2\pi i E_A(\cdot, x))$, by simple computations we obtain
\begin{equation*}
{\rm im}\,  \phi_2 = \{\chi^A_0, \,  \chi^A_1, \, \chi^A_2 \chi^A_5, \, \chi^A_3 \chi^A_5 \}.
\end{equation*}

The complete linear system $|\mathscr{L}|$ contains exactly two reducible divisors $E_1'+E_2'$ and ${E_1'}^*+{E_2'}^*$, which are the pullback via $\psi$ of two reducible divisors $E_1+E_2$ and $E_1^*+E_2^*$ on $B$; moreover ${E_1'}^*+{E_2'}^*$ is the translated of
$E_1'+E_2'$ by the point $\frac{\lambda_1+\lambda_2}{2}$, see Figure \ref{fig:M2}.

\definecolor{ttffqq}{rgb}{0.2,0.6.5,0}
\definecolor{qqqqcc}{rgb}{0,0,0.8}
\definecolor{ffqqqq}{rgb}{1,0,0}
\definecolor{xdxdff}{rgb}{0.49,0.49,1}
\definecolor{uququq}{rgb}{0.25,0.25,0.25}
\definecolor{zzttqq}{rgb}{0.6,0.2,0}
\begin{figure}[H]
\centering
\begin{tikzpicture}[line cap=round,line join=round,>=triangle 45,x=1.5cm,y=1.5cm]
\clip(-4.2,-4.2) rectangle (25,2);
\draw  (-0.46,1.72)-- (-4.12,1.74);
\draw  (-4.12,1.74)-- (-4.14,-1.92);
\draw  (-4.14,-1.92)-- (-0.48,-1.94);
\draw  (-0.48,-1.94)-- (-0.46,1.72);
\draw  (2.06,-1.94)-- (5.7,-1.92);
\draw (5.7,-1.92)-- (5.68,1.72);
\draw  (5.68,1.72)-- (2.04,1.7);
\draw  (2.04,1.7)-- (2.06,-1.94);
\draw [->] (0,0) -- (1.66,0);
\draw [shift={(-2.25,0.54)},color=ffqqqq]  plot[domain=-3.4:0.25,variable=\t]({1*1.5*cos(\t r)+0*1.5*sin(\t r)},{0*1.5*cos(\t r)+1*1.5*sin(\t r)});
\draw [shift={(-2.26,-0.58)},color=ffqqqq]  plot[domain=-0.37:3.51,variable=\t]({1*1.59*cos(\t r)+0*1.59*sin(\t r)},{0*1.59*cos(\t r)+1*1.59*sin(\t r)});
\draw [shift={(-2.29,0.59)},color=qqqqcc]  plot[domain=-3.84:0.7,variable=\t]({1*0.95*cos(\t r)+0*0.95*sin(\t r)},{0*0.95*cos(\t r)+1*0.95*sin(\t r)});
\draw [shift={(-2.29,-0.59)},color=qqqqcc]  plot[domain=-0.7:3.84,variable=\t]({1*0.95*cos(\t r)+0*0.95*sin(\t r)},{0*0.95*cos(\t r)+1*0.95*sin(\t r)});
\draw [color=ffqqqq] (2.04,1.14)-- (5.68,1.14);
\draw [color=ffqqqq] (2.56,1.7)-- (2.58,-1.94);
\draw [color=qqqqcc] (2.06,-1.34)-- (5.7,-1.32);
\draw [color=qqqqcc] (5.12,1.72)-- (5.16,-1.92);
\draw (0.56,0.52) node[anchor=north west] {$\psi$};
\draw (-2.6,-1.92) node[anchor=north west] {$A$};
\draw (3.1,-1.92) node[anchor=north west] {$B=E_1 \times E_2$};
\fill [color=ffqqqq] (-0.82,0.09) circle (1.5pt);
\draw[color=ffqqqq] (-3.90,0.10) node {$e_5$};
\fill [color=ffqqqq] (-3.69,0.11) circle (1.5pt);
\fill [color=ffqqqq] (-0.82,0.09) circle (1.5pt);
\draw[color=ffqqqq] (-0.62,0.10) node {$e_4$};
\draw[color=ffqqqq] (-0.60,0.9) node {$E'_1$};
\draw[color=ffqqqq] (-0.61,-1.2) node {$E'_2$};
\draw[color=qqqqcc] (-1.30,1.2) node {${E'_1}^*$};
\draw[color=qqqqcc] (-1.30, -1.2) node {${E'_2}^*$};
\fill [color=qqqqcc] (-1.54,0) circle (1.5pt);
\draw[color=qqqqcc] (-1.75,0) node {$e_6$};
\fill [color=qqqqcc] (-3.04,0) circle (1.5pt);
\draw[color=qqqqcc] (-2.80,0) node {$e_7$};
\fill [color=ttffqq] (-1.34,-0.65) circle (1.5pt);
\draw[color=ttffqq] (-1.52,-0.56) node {$e_0$};
\fill [color=ttffqq] (-3.24,-0.59) circle (1.5pt);
\draw[color=ttffqq] (-3.00,-0.56) node {$e_1$};
\fill [color=ttffqq] (-1.35,0.72) circle (1.5pt);
\draw[color=ttffqq] (-1.52,0.60) node {$e_2$};
\fill [color=ttffqq] (-3.24,0.67) circle (1.5pt);
\draw[color=ttffqq] (-3.00,0.60) node {$e_3$};
\fill [color=ffqqqq] (2.56,1.14) circle (1.5pt);
\draw[color=ffqqqq] (2.80,1.35) node {$\frac{\lambda_1}{2}$};
\draw[color=ffqqqq] (3.86,1.26) node {$E_1$};
\draw[color=ffqqqq] (2.78,0.10) node {$E_2$};
\draw[color=qqqqcc] (3.86,-1.15) node {$E_1^*$};
\draw[color=qqqqcc] (4.95,0.10) node {$E_2^*$};
\fill [color=qqqqcc] (5.15,-1.34) circle (1.5pt);
\draw[color=qqqqcc] (5.42,-1.10) node {$\frac{\lambda_2}{2}$};
\fill [color=ttffqq] (5.13,1.14) circle (1.5pt);
\draw[color=ttffqq] (5.38,1.30) node {$0$};
\fill [color=ttffqq] (2.58,-1.34) circle (1.5pt);
\draw[color=ttffqq] (2.90,-1.10) node {$\frac{\lambda_1+\lambda_2}{2}$};
\draw [color=ttffqq](-2.22,-2.56) node[anchor=north west] {\parbox{8.86 cm}{$e_0=0, \; \; e_1=\mu_2,  \; \; e_2=\frac{\lambda_1+\lambda_2}{2}, \; \; e_3=\mu_2+\frac{\lambda_1+\lambda_2}{2},$}};
\draw [color=ffqqqq](-2.2,-3.12) node[anchor=north west] {\parbox{4.26 cm}{$e_4=\frac{\lambda_1}{2}, \; \; e_5= \mu_2+\frac{\lambda_1}{2},$}};
\draw [color=qqqqcc](-2.22,-3.76) node[anchor=north west] {\parbox{4.18 cm}{$e_6=\frac{\lambda_2}{2}, \; \; e_7=\mu_2+\frac{\lambda_2}{2}.$}};
\end{tikzpicture}
\caption{The degree $2$ isogeny $\psi\colon A \to B$.}
\label{fig:M2}
\end{figure}

\begin{lemma} \label{lemma:reducible}
Fix $x \in A$. Then the linear system $|t_x^*\mathscr{L}|$ contains a reducible element $D$ passing through $0$ if and only if we are in one of the following cases:
\begin{itemize}
\item[$\boldsymbol{(1)}$] $\psi(x)= (y_1, \, 0);$
\item[$\boldsymbol{(2)}$] $\psi(x)= (y_1, \, \frac{\tau_2}{2});$
\item[$\boldsymbol{(3)}$] $\psi(x)= (0, \, y_2);$
\item[$\boldsymbol{(4)}$] $\psi(x)= (\frac{\tau_1}{2}, \, y_2),$
\end{itemize}
where we use the notation $(y_1, \, y_2) \in B=E_1 \times E_2$.
\end{lemma}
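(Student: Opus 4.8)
The plan is first to strip away the dependence on the linear system, reducing to a condition on the point $x$ alone, and then to transport that condition down to $B$ through the isogeny $\psi$.

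Since $t_x$ is a translation automorphism of $A$, the members of $|t_x^*\mathscr{L}|$ are precisely the translates by $-x$ of the members of $|\mathscr{L}|$, and such a translate is reducible exactly when the original divisor is. By the description of $|\mathscr{L}|$ recalled before the lemma, its only reducible members are $E_1'+E_2'$ and ${E_1'}^*+{E_2'}^*$. The translate by $-x$ of a divisor $D'$ contains $0$ if and only if $x\in\mathrm{Supp}(D')$, so I conclude immediately that $|t_x^*\mathscr{L}|$ contains a reducible element through $0$ if and only if
\[
x\in E_1'\cup E_2'\cup {E_1'}^*\cup {E_2'}^*.
\]

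Next I would push this membership condition down through $\psi$. Because $E_1'+E_2'$ and ${E_1'}^*+{E_2'}^*$ are the $\psi$-pullbacks of the reducible divisors $E_1+E_2$ and $E_1^*+E_2^*$ on $B$, each of the four curves upstairs is the full preimage of the corresponding curve on $B$, whence $x\in E_1'\iff\psi(x)\in E_1$ and likewise for the other three. It then remains to locate the four curves explicitly on $B=E_1\times E_2$. From Figure \ref{fig:M2} the divisor $E_1+E_2$ splits into a horizontal component $E_1\times\{0\}$ and a vertical component $\{\tfrac{\tau_1}{2}\}\times E_2$, while the starred divisor is the translate of $E_1+E_2$ by $\psi(c)=(\tfrac{\tau_1}{2},\tfrac{\tau_2}{2})$, where $c=\tfrac{\lambda_1+\lambda_2}{2}\in K(\mathscr{L})$ is the point realizing ${E_1'}^*+{E_2'}^*=t_c^*(E_1'+E_2')$. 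As a horizontal (resp. vertical) translation leaves a horizontal (resp. vertical) curve in its own family, the starred components are $E_1\times\{\tfrac{\tau_2}{2}\}$ and $\{0\}\times E_2$. Hence $\psi(x)$ lies on one of the four curves exactly when $\psi(x)$ equals $(y_1,0)$, $(y_1,\tfrac{\tau_2}{2})$, $(0,y_2)$ or $(\tfrac{\tau_1}{2},y_2)$ for suitable $y_1,y_2$, which are precisely the four cases in the statement.

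The second paragraph is essentially formal once one grants that $E_i'=\psi^*E_i$ set-theoretically, so that $\psi^{-1}(E_i)=E_i'$ and the biconditional $x\in E_i'\iff\psi(x)\in E_i$ holds in both directions; this is exactly the pullback description recalled before the lemma, and it can be double-checked directly from the inclusion $\Lambda_A\subset\Lambda_B$ and the shape of $\ker\psi$. The only genuine work is therefore the explicit identification of the four curves on $B$: placing the two horizontal and the two vertical components at the correct half-periods and computing the translate by $\psi(c)$ without a coordinate slip. This bookkeeping, which the data collected in Figure \ref{fig:M2} makes routine, is where I would concentrate the verification.
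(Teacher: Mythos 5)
Your proof is correct and follows essentially the same route as the paper's: both arguments reduce the existence of a reducible member of $|t_x^*\mathscr{L}|$ through $0$ to the position of $\psi(x)$ relative to the four curves $E_1,\, E_2,\, E_1^*,\, E_2^*$ of Figure \ref{fig:M2}, with an identical four-case division. The only cosmetic difference is direction: you pull the curves back to $A$ (using $E_i'=\psi^{-1}(E_i)$, ${E_i'}^*=\psi^{-1}(E_i^*)$) and test membership of $x$, whereas the paper pushes the reducible divisor forward to $B$ and asks which component passes through $0$.
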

\begin{proof}
We refer again to Figure \ref{fig:M2}. The curve $\psi(D) \subset B$ is a translate of $E_1+E_2$ containing $0$. This means that either $E_1$ or $E_2^*$ must be a component of $\psi(D)$. In the former case, either $E_1$ is fixed by the translation by $\psi(x)$ (case $\boldsymbol{(1)}$) or such a translation sends $E_1^*$ to $E_1$ (case $\boldsymbol{(2)}$). In the latter case, either $E_2^*$ is fixed by the translation by $\psi(x)$ (case $\boldsymbol{(3)}$) or such a translation sends $E_2$ to $E_2^*$ (case $\boldsymbol{(4)}$). This completes the proof.
\end{proof}

\begin{proposition}\label{prop:node}
There exists a reducible curve $D \in |\mL \otimes \mQ^{1/2}|$ such that $0$ is an ordinary double point for $D$ if and only if $\mathcal{Q}=\mathcal{O}_A$ and $\mathcal{Q}^{1/2}$ corresponds to the character $\chi^A_1$.
\end{proposition}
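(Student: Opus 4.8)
The plan is to single out the \emph{unique} reducible divisor on $A$ that can carry an ordinary double point at the origin, and then to compute the line bundle it defines.

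First I would pass through the isogeny $\psi\colon A \to B = E_1 \times E_2$. Since $\phi_{\mL}\colon A \to \widehat{A}$ is surjective, every $\mL \otimes \mQ^{1/2}$ can be written as $t_x^*\mL$ for a suitable $x \in A$, so its reducible members are the pullbacks under $t_x^*$ of the two reducible members of $|\mL|$; as $\psi$ is a homomorphism, these are again pullbacks $\psi^*(\Theta)$ with $\Theta$ a translate of the product theta divisor $E_1 + E_2$ on $B$. Such a $\Theta$ is a sum $\{z_1 = c_1\} + \{z_2 = c_2\}$ of a vertical and a horizontal curve, whose only singularity is the ordinary node $(c_1, c_2)$. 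Because $\psi$ is \'etale, $\psi^*(\Theta)$ has an ordinary double point at $0 \in A$ if and only if $\Theta$ has one at $\psi(0) = 0 \in B$, i.e.\ exactly when $c_1 = c_2 = 0$. Reading the incidences off Figure \ref{fig:M2} --- the origin lies on the component $E_1 = \{z_2 = 0\}$ of the first special divisor and on the component $E_2^* = \{z_1 = 0\}$ of the second --- the unique such $\Theta$ is the mixed theta divisor $E_1 + E_2^*$, so the only reducible divisor on $A$ with an ordinary double point at the origin is
\[
D_0 := \psi^*(E_1 + E_2^*) = \ker \pi_1 + \ker \pi_2 ,
\]
the sum of the two elliptic subtori, which meet transversally at $0$ because their intersection number is $E_1'E_2' = 2$, distributed over the two points of $\ker\psi$. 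This step is a nodal refinement of Lemma \ref{lemma:reducible} and is routine.

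The core of the argument is then to place $\mO_A(D_0)\otimes\mL^{-1}$ in $\mathrm{Pic}^0(A)$. As $\mL = \mO_A(E_1'+E_2') = \psi^*\mO_B(E_1+E_2)$, I would compare the two theta divisors $E_1+E_2$ and $E_1+E_2^*$ on $B$: they share the component $E_1 = \{z_2 = 0\}$ and differ only in the vertical one, $E_2 = \{z_1 = \tfrac{\tau_1}{2}\}$ against $E_2^* = \{z_1 = 0\}$. Hence
\[
\mO_B(E_1+E_2)\otimes\mO_B(E_1+E_2^*)^{-1}
\]
is the degree-zero bundle attached to the translation by the $2$-torsion point $\tfrac{\lambda_1}{2}$ in the $E_1$-direction, which by Appell-Humbert is the character $\chi^B_1 = \exp\!\big(2\pi i\, E_B(\,\cdot\,,\tfrac{\lambda_1}{2})\big) = (1,1,-1,1)$. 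Applying $\widehat{\psi}$ and using \eqref{eq:chi}, which yields $\widehat{\psi}(\chi^B_1) = \chi^A_1$, I obtain $\mL = \mO_A(D_0)\otimes\chi^A_1$, and therefore $\mO_A(D_0) = \mL \otimes \chi^A_1$ since $\chi^A_1$ is $2$-torsion.

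Putting the two steps together, a reducible $D \in |\mL \otimes \mQ^{1/2}|$ with an ordinary double point at $0$ exists if and only if $\mL \otimes \mQ^{1/2} = \mO_A(D_0) = \mL \otimes \chi^A_1$, that is, if and only if $\mQ^{1/2} = \chi^A_1$; then $\mQ = (\chi^A_1)^{\otimes 2} = \mO_A$, so we land in the type $Ib$ case of Proposition \ref{theorem:PP13-K^26}. The main obstacle is the character computation of the second step: one must extract the correct incidence pattern from Figure \ref{fig:M2} --- in particular that $0$ is a \emph{smooth} point of each of the two special divisors $E_1'+E_2'$ and ${E_1'}^*+{E_2'}^*$, but the node of the mixed divisor $D_0$ --- so that the $\mathrm{Pic}^0$-difference is precisely the $2$-torsion class $\tfrac{\lambda_1}{2}$ and, after pushing through $\widehat{\psi}$, lands on $\chi^A_1$ and not on another element of $\widehat{A}[2]$. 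Once this normalization is in place, the rest is formal.
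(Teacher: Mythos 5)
Your proposal is correct and takes essentially the same approach as the paper: both proofs single out the unique reducible divisor with an ordinary double point at $0$ --- your $D_0=\psi^*(E_1+E_2^*)$ is exactly the paper's translate of $E_1'+E_2'$ by $\frac{\lambda_1}{2}$, so that $\mL\otimes\mQ^{1/2}=t^*_{\lambda_1/2}\mL$ --- and then identify $\mQ^{1/2}$ with the $2$-torsion character $\chi^A_1$. The only difference is where the Appell--Humbert computation is performed: the paper evaluates $\phi_{\mL}(\frac{\lambda_1}{2})$ directly on $A$ via $E_A$ (citing \cite[Lemma 2.3.2]{BL04}), whereas you compute $\chi^B_1$ on $B$ and transfer it to $A$ through $\widehat{\psi}$ using \eqref{eq:chi}, which also has the side benefit of making the paper's ``looking at Figure \ref{fig:M2}'' step rigorous.
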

\begin{proof}
If $0$ is an ordinary double point for $D$, looking at Figure \ref{fig:M2} we see that $|\mL \otimes \mQ^{1/2}|$ has to be in the linear system containing the translate of $E'_1+E'_2$ by $\frac{\lambda_1}{2}$ (or, which is the same, the translate of ${E_1'}^*+{E_2'}^*$ by $\frac{\lambda_2}{2}$). In other words we must have
$\mathcal{L}\otimes \mathcal{Q}^{1/2}=t^*_{\lambda_1/2} \mL$.
By \cite[Lemma 2.3.2]{BL04} it follows that $\mathcal{Q}^{1/2}$ corresponds to the character ${\rm exp}(E_A(\cdot, \, \frac{\lambda_2}{2}))$, which is precisely $\chi^A_1$. Notice that, in the notation of Lemma \ref{lemma:reducible}, this situation corresponds to either case $\boldsymbol{(1)}$ with $\psi(x)=(\frac{\tau_1}{2}, \, 0)$ or to case $\boldsymbol{(3)}$ with $\psi(x)=(0, \, \frac{\tau_2}{2})$.
\end{proof}

\begin{proposition}\label{prop:smooth}
Take $\mathcal{Q} \in \emph{im} \,\phi_2$. Then there exists a reducible curve $D \in |\mL \otimes \mQ^{1/2}|$ such that $0$ is a smooth point of $D$ if and only if we are in one of the following two cases:
\begin{itemize}
\item[$\boldsymbol{(a)}$] $\mathcal{Q}=\mathcal{O}_A$ and $\mathcal{Q}^{1/2}$ corresponds to one of the four characters $\chi^A_2, \, \chi^A_3, \, \chi^A_5, \, \chi^A_1 \chi^A_5;$
\item[$\boldsymbol{(b)}$] $\mathcal{Q}$ corresponds to the character  $\chi^A_1 \in {\rm im}\, \phi^{\times}_2$.
\end{itemize}
\end{proposition}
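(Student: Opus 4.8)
The plan is to mirror the proof of Proposition \ref{prop:node}, but now to isolate the locus where the reducible curve is \emph{smooth} at $0$ rather than nodal, and then to feed in the constraint $\mathcal{Q}\in\mathrm{im}\,\phi_2$ and read off the characters. First I would reduce everything to the product $B=E_1\times E_2$ through the isogeny $\psi\colon A\to B$, which is étale of degree $2$ and therefore preserves the local type of any curve at $0$. Writing $\mathcal{L}\otimes\mathcal{Q}^{1/2}=t_x^*\mathcal{L}$ with $\phi_{\mathcal{L}}(x)=\mathcal{Q}^{1/2}$, every reducible member of $|\mathcal{L}\otimes\mathcal{Q}^{1/2}|$ is the $\psi$-pullback of a translate of one of the two theta divisors $E_1+E_2$ or $E_1^*+E_2^*$ on $B$, so by Lemma \ref{lemma:reducible} it passes through $0$ exactly when $\psi(x)$ has one of the four listed forms.

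Next I would separate the smooth locus from the nodal one. A reducible translate through $0$ has $0$ as a \emph{node} precisely when both its horizontal and its vertical component pass through $0$; as established in Proposition \ref{prop:node} this happens only for $\psi(x)\in\{\frac{\lambda_1}{2},\frac{\lambda_2}{2}\}$, i.e. for $\mathcal{Q}^{1/2}=\chi^A_1$. Hence $0$ is a \emph{smooth} point of a reducible member if and only if $\psi(x)$ lies in one of the four forms but exactly one component passes through $0$, that is, $\psi(x)$ has a single coordinate in $\{0,\frac{\tau_1}{2}\}$ (first coordinate) or $\{0,\frac{\tau_2}{2}\}$ (second coordinate), while avoiding the two nodal points.

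I would then impose $\mathcal{Q}=(\mathcal{Q}^{1/2})^2\in\mathrm{im}\,\phi_2=\phi_{\mathcal{L}}(A[2])$, which forces $2x\in A[2]$, hence $x\in A[4]$ and $\psi(x)\in B[4]$, leaving only finitely many candidates. When $\mathcal{Q}=\mathcal{O}_A$ the square root $\mathcal{Q}^{1/2}$ is $2$-torsion, and evaluating $\phi_{\mathcal{L}}$ on the half-periods $\frac{\mu_1}{2},\frac{\lambda_1+\mu_1}{2},\frac{\mu_2}{2},\frac{\lambda_1+\mu_2}{2}$ by means of the explicit form $E_A$ of the Appendix gives $\psi(x)=(\tfrac12,0),(\tfrac{1+\tau_1}{2},0),(0,\tfrac12),(\tfrac{\tau_1}{2},\tfrac12)$, each lying in a smooth form of Lemma \ref{lemma:reducible}; the corresponding characters are exactly $\chi^A_2,\chi^A_3,\chi^A_5,\chi^A_1\chi^A_5$, giving case $\boldsymbol{(a)}$. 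I would then check that the remaining $2$-torsion square roots produce $\psi(x)\in\{(\tfrac12,\tfrac12),(\tfrac{\tau_1}{4},\tfrac{\tau_2}{4}),\dots\}$, no coordinate of which lies in $\{0,\frac{\tau_i}{2}\}$, so that by Lemma \ref{lemma:reducible} \emph{no} reducible passes through $0$; together with $\chi^A_0$ (excluded since $\mathcal{Q}^{1/2}\neq\mathcal{O}_A$) and $\chi^A_1$ (the node), this shows the four characters above are the only ones.

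Finally, for $\mathcal{Q}\in\mathrm{im}\,\phi_2^{\times}$ the square root has order $4$ and $\phi_{\mathcal{L}}(2x)=\mathcal{Q}$, so $2\psi(x)\in\psi\bigl(\phi_{\mathcal{L}}^{-1}(\mathcal{Q})\cap A[2]\bigr)$. The $E_A$-computation shows that for $\mathcal{Q}=\chi^A_1$ one gets $2\psi(x)\in\{\frac{\lambda_1}{2},\frac{\lambda_2}{2}\}$, so that for a suitable square root $\psi(x)$ is a quarter-period with one coordinate in $\{0,\frac{\tau_i}{2}\}$, landing in a smooth form and producing a smooth reducible (case $\boldsymbol{(b)}$); whereas for $\mathcal{Q}=\chi^A_2\chi^A_5$ or $\chi^A_3\chi^A_5$ one gets $2\psi(x)\in\{(\tfrac12,\tfrac12),\dots\}$, which forces both coordinates of $\psi(x)$ off $\{0,\frac{\tau_i}{2}\}$ and hence no reducible through $0$. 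I expect the main obstacle to be exactly this bookkeeping: matching each admissible character $\phi_{\mathcal{L}}(x)$ to its point $\psi(x)$ and to the correct case of Lemma \ref{lemma:reducible}, so as to separate the smooth locus from the nodal character $\chi^A_1$ of Proposition \ref{prop:node} and from the empty locus $\{\chi^A_2\chi^A_5,\chi^A_3\chi^A_5\}$, all while controlling the sign ambiguities coming from $\ker\psi\subset K(\mathcal{L})$.
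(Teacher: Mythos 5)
Your proposal is correct and follows essentially the same route as the paper's proof: reduce to $B=E_1\times E_2$ via Lemma \ref{lemma:reducible}, excise the nodal locus identified in Proposition \ref{prop:node}, use $\mathcal{Q}\in{\rm im}\,\phi_2$ to force $x\in A[4]$, and then enumerate which characters $\phi_{\mathcal{L}}(x)$ land in a smooth position (the paper compresses this enumeration into a ``tedious but straightforward computation,'' which you spell out). The only cosmetic difference is your representative $\frac{\lambda_1+\mu_2}{2}$ in place of the paper's $\frac{\lambda_2+\mu_2}{2}$; these differ by $\frac{\lambda_1+\lambda_2}{2}\in K(\mathcal{L})$, so they determine the same character $\chi^A_1\chi^A_5$ and the same conclusion.
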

\begin{proof}
If a curve as in the statement exists, by Lemma \ref{lemma:reducible} we see that $\mL \otimes  \mQ^{1/2} = t_x^* \mL$, where either $\psi(x)=(\frac{\tau_1}{2}, \, y_2)$ or $\psi(x)=(y_1, \, \frac{\tau_2}{2})$ and $y_i$ has either order $2$ or order $4$.
Now a tedious but straightforward computation shows that if $y_i$ has order $2$ then $x \in \{\frac{\mu_1}{2}, \, \frac{\lambda_1+\mu_1}{2}, \, \frac{\mu_2}{2}, \, \frac{\lambda_2+\mu_2}{2} \}$ and we are in case $\boldsymbol{(a)}$, whereas if $y_i$ has order $4$ we are in case $\boldsymbol{(b)}$.
\end{proof}

\begin{remark} \label{rem:types}
Using the terminology of \cite{PP13b} (explained in Proposition \ref{theorem:PP13-K^26}), the cases in Proposition \ref{prop:node} belong to  \emph{surfaces of type Ib}, those in Proposition \ref{prop:smooth}, $\boldsymbol{(a)}$  belong to \emph{surfaces of type Ia} and those in Proposition \ref{prop:smooth}, $\boldsymbol{(b)}$ to \emph{surfaces of type II}.
\end{remark}
%

\bigskip
\bigskip
\bigskip

Matteo Penegini \\Dipartimento di Matematica ``Federigo Enriques'', Universit\`{a} degli Studi di Milano, Via Saldini 50, 20133 Milano, Italy \\
\emph{E-mail address:}
 \verb|matteo.penegini@unimi.it| \\ \\

Francesco Polizzi \\ Dipartimento di Matematica e Informatica,
Universit\`{a} della Calabria, Cubo 30B, 87036 Arcavacata di Rende
(Cosenza), Italy \\ \emph{E-mail address:}
\verb|polizzi@mat.unical.it|


\begin{thebibliography} {9}
%
%
%
\bibitem[At56]{At56}
M. F. Atiyah, \textit{On the Krull-Schmidt theorem with application
to sheaves}. Bulletin de la S. M. F., tome \textbf{84} (1956),
307-317.

\bibitem[At57]{At57}
M. F. Atiyah, \textit{Vector bundles over an elliptic curve}.
Proc. London Math. Soc. \textbf{3} (1957), 414--452.
%
%

%




\bibitem[Ba87]{Ba87}
W. Barth, \textit{Abelian surfaces with {$(1,2)$}-polarization}.
Adv. Stud. Pure Math. Vol. \textbf{10} (1987), 41--84.
%

\bibitem[Be88]{Be88}
A. Beauville, \textit{Annulation du  {$H^1$} et systemes paracanoniques sur les surfaces}.
J. Reine Angew. Math. \textbf{388} (1988), 149-157.
%
\bibitem[Ba00]{Ba00}
M.A. Barja, \textit{Lower bounds of the slope of fibred threefolds}. Intern. J. Math. \textbf{11} (2000), 461-491.
%
\bibitem[BCP06]{BCP06}
I. Bauer, F. Catanese, R. Pignatelli, \textit{Complex surfaces of
general type: some recent progress}. Global aspects of complex
geometry. Springer, Berlin (2006), 1--58.
%

%
\bibitem[BHPV03]{BHPV03}
W. Barth, K. Hulek, C.A.M. Peters, A. Van de Ven, \textit{Compact
Complex Surfaces}. Grundlehren der Mathematischen Wissenschaften,
Vol \textbf{4}, Second enlarged edition, Springer-Verlag, Berlin,
2003.
%
%
%


%

\bibitem[BL04]{BL04}
C. Birkenhake, H. Lange, \textit{Complex abelian varieties}.
Grundlehren der Mathematischen Wissenschaften, Vol \textbf{302},
Second edition, Springer-Verlag, Berlin, 2004.

%
%
%




%

\bibitem[Ca00]{cat00} 
F. Catanese, \textit{Fibred surfaces, varieties isogenous to a
product and related moduli spaces}. Amer. J. Math. \textbf{122},
(2000), 1--44.
%

%
\bibitem[CCML98]{CCML98}
F. Catanese, C. Ciliberto, M. Mendes Lopes, \textit{On the
classification of irregular surfaces of general type with
nonbirational bicanonical map}. Trans. Amer. Math. Soc.
\textbf{350}, no. 1 (1998), 275--308.
%
\bibitem[CD13]{CD13}
F. Catanese, M. Dettweiler, \textit{Answer to a question by Fujita on variation of Hodge structure}. ArXiv:1311.3232v1.
%

%

\bibitem[CH06]{CH06}
J. Chen, C. Hacon, \textit{A surface of general type with $p_g=q=2$
and $K^2=5$}. Pacific. J. Math. \textbf{223} (2006), 219--228.


%
%

\bibitem[D82]{D82}
O. Debarre, \textit{{In$\acute{e}$galit$\acute{e}$s
num$\acute{e}$riques pour les surfaces de type
g$\acute{e}$n$\acute{e}$ral}}. Bull. Soc. Math. France \textbf{110}
(1982), 319--346.

%

\bibitem[Fre71]{Fre71}
E. Freitag, \textit{\"Uber die Struktur der Funktionenk$\ddot{\textrm{o}}$rper
zu hyperabelschen Gruppen I}. J. Reine Angew. Math.
$\textbf{247}$ (1971), 97-117.
%

\bibitem[F98]{F98}
R. Friedman, \textit{Algebraic surfaces and holomorphic vector
bundles}. Universitext, Springer-Verlag, New York, (1998).
%
\bibitem[Fu78a]{Fu78}
T. Fujita, \textit{On K$\ddot{a}$hler fibre spaces over curves}.
J. Math. Soc. Japan \textbf{30} (1978), 779-794.


\bibitem[Fu78b]{Fu78b}
T. Fujita, \textit{The sheaf of relative canonical forms of K$\ddot{a}$hler fibre space}. 
Proc. Japan Accad. Ser. A Math. Sci \textbf{54} (1978), 183--184.

%

%

\bibitem[H71]{H71}
R. Hartshorne, \textit{Ample vector bundles on curves}, Nagoya Math. J., \textbf{43}, (1971),
73--89.

\bibitem[H77]{H77}
R. Hartshorne, \textit{Algebraic Geometry}.  Graduate Texts in
Mathematics \textbf{52}, Springer 1977.
%

%


%

\bibitem[HP02]{HP02}
C. Hacon, R. Pardini, \textit{Surfaces with $p_g=q=3$}. Trans. Amer.
Math. Soc. \textbf{354} (2002), 2631--2638.

%

%

%


%
\bibitem[K86]{K86}
J. Koll\'ar, \textit{Higher Direct Images of Dualizing Sheaves I}.
Ann. of Math., {\bf 123}, (1986), 11--42.

\bibitem[Ke91]{Ke91} G. R. Kempf, \textit{Complex abelian varieties and theta functions}. Universitext, Springer 1991.

%

%
%
\bibitem[MLP11]{MLP11}
M. Mendes Lopes, R. Pardini, \textit{The geography of irregular
surfaces}. Current Developments in Algebraic Geometry, MSRI Publications Volume \textbf{59} (2011), 349-378.

\bibitem[MPP13]{MPP13}
M. Mendes Lopes, R. Pardini, G. P. Pirola, \textit{Continuous families of divisors, paracanonical systems and a new inequality for varieties of maximal Albanese dimension}.
Geom. Topol. \textbf{17} (2013), 1205-1223.

%


%
%




%

\bibitem[Pa91]{Pa91}
R. Pardini, \textit{Abelian covers of algebraic varieties}.  J. Reine
Angew. Math. \textbf{417} (1991), 191--213.
%

\bibitem[Pe11]{Pe11}
M. Penegini, \textit{The classification of isotrivial fibred
surfaces with $p_g=q=2$}. Collect. Math. \textbf{62}, No.
\textbf{3}, (2011), 239--274.
%

\bibitem[Pe13]{Pe12}
M. Penegini, \textit{On the classification of surfaces of general type with $p_g=q=2$}. Boll. Uni. Mat. Ital., {\bf VI}, (2013), 549--563.
%

\bibitem[PP13a]{PP13a}
M. Penegini, F. Polizzi, \textit{On surfaces with $p_g=q=2$, $K^2=5$
and Albanese map of degree $3$}. Osaka J. Math. \textbf{50} (2013),
643-686.
%

\bibitem[PP13b]{PP13b}
M. Penegini, F. Polizzi, \textit{On surfaces with $p_g=q=2$, $K^2=6$
and Albanese map of degree $2$}.  Canad. J. Math. \textbf{65},
(2013), 195-221.
%

\bibitem[PP14]{PP14}
M. Penegini, F. Polizzi, \textit{A new family of surfaces with
$p_g=q=2$ and $K^2=6$ whose Albanese map has degree $4$}. J. London Math. Soc. \textbf{90} Number 3 (2014), 741-762.



\bibitem[Pi02]{Pi02}
G.P. Pirola, \textit{Surfaces with $p_g=q=3$}. Manuscripta Math.
\textbf{108} no. 2 (2002), 163--170.
%

\bibitem[Pol10]{Pol10}
F. Polizzi, \textit{Numerical properties of isotrivial fibrations}.
Geometriae Dedicata \textbf{147} (2010), 323-355.
%


%
%

%

%

%

\bibitem[Se96]{Se96}
F. Serrano, \textit{Isotrivial fibred surfaces}. Annali di Matematica Pura ed Applicata
 \textbf{171} (1996), Issue 1, pp 63-81.
%
\bibitem[Sim93]{Sim93}
C. Simpson, \textit{Subspaces of moduli spaces of rank one local systems}.
Ann. Sci. \'Ecole Norm. Sup. (4), \textbf{26}, (1993), 361--401.
%
%
%

%
%

%

\bibitem[Xi85]{Xi85}
G. Xiao, \textit{Surfaces fibr\'ees en courbes de genre deux}. Lecture Notes in Mathematics \textbf{1137}, Springer 1985.

\bibitem[Xi87]{Xi87}
G. Xiao, \textit{Fibred algebraic surfaces with low slope}.  Math.
Ann.  \textbf{276} (1987),  449-466.


%

\bibitem[Z03a]{Z03}
F. Zucconi, \textit{Surfaces with $p_g=q=2$ and an irrational
pencil}. Canad. J. Math. \textbf{55} (2003), no. 3, 649--672.

\bibitem[Z03b]{Z03b}
F. Zucconi, \textit{Generalized hyperelliptic surfaces}. Trans. Amer. Math. Soc. \textbf{355} (2003), 4045-4059.

\end{thebibliography}
\end{document}